\newtheorem{Prop}{Proposition}[section]
\newtheorem{Thm}[Prop]{Theorem}
\newtheorem{Lem}[Prop]{Lemma}
\newtheorem{Cor}[Prop]{Corollary}
\newtheorem*{Thm0}{Theorem}
\theoremstyle{definition}
\newtheorem{Ex}[Prop]{Example}
\begin{document}

\title{A NOTE ON THE CONCORDANCE INVARIANT EPSILON}

\author{Shida Wang}

\email{wang217@indiana.edu}

\begin{abstract}

We compare the smooth concordance invariants Upsilon, phi and epsilon.
Previous work gave examples of knots with one of the Upsilon and phi invariants zero but the epsilon invariant nonzero.
We build an infinite family of linearly independent knots with both the Upsilon and phi invariants zero but the epsilon invariant nonzero.

\end{abstract}

\maketitle

\section{Introduction}

There are three recently defined smooth concordance invariants derived from knot Heegaard Floer theory~\cite{CFKdef1,CFKdef2}:
the~$\varepsilon$-invariant~\cite{epsilon} defined by Hom,
the~$\Upsilon$-invariant~\cite{upsilon} defined by Ozsv\'{a}th-Stipsicz-Szab\'{o},
and the~$\varphi$-invariant~\cite{phi} defined by Dai-Hom-Stoffregen-Truong.
These invariants have been demonstrated to be very useful in proving linear independence of families of knots in the smooth concordance group~$\mathcal{C}$.

One may wonder whether one of the three invariants is stronger than the others.
In fact,\linebreak the~$\varphi$-invariant factors through equivalence classes induced by the~$\varepsilon$-invariant,
so the vanishing of the~$\varepsilon$-invariant implies that of the~$\varphi$-invariant.

In~\cite{example0}, Hom gives a knot with vanishing~$\Upsilon$-invariant but nonvanishing~$\varepsilon$-invariant.
This knot is also shown to have nonvanishing~$\varphi$-invariant~\cite[Proposition 1.10]{phi}.
Hence the~$\varphi$-invariant is not weaker than the~$\Upsilon$-invariant.
Furthermore, the author provides infinitely many linearly independent such knots~\cite{example1,example2}.

It is not known yet whether there is a knot with vanishing~$\varepsilon$-invariant but nonvanishing~$\Upsilon$\nobreakdash-invariant.
In~\cite{example2}, the author gives infinite families of linearly independent knots with vanishing~$\varphi$\nobreakdash-invariant but nonvanishing $\Upsilon$-invariant.
These knots are proved to have nonvanishing~$\varepsilon$\nobreakdash-invariant.

In this note, we will establish the following result.

\begin{Thm0}There exists a subgroup of $\mathcal{C}$ isomorphic to $\mathbb{Z}^\infty$ such that each of its nonzero elements has vanishing~$\Upsilon$-invariant and~$\varphi$-invariant but nonvanishing $\varepsilon$-invariant.\end{Thm0}

Summarizing, we have the following table.

\setlength{\doublerulesep}{0pt}
\begin{center}
\begin{tabular}{||c||c|c||}
  \hline\hline
   & $\Upsilon=0$ & $\Upsilon\neq0$ \\  \hline\hline
  $\varepsilon=0$ & easy (e.g. unknot and $T_{2,5}\#-T_{2,3}\#-T_{2,3}$) & open question \\  \hline
  $\varepsilon\neq0$ and $\varphi=0$ & this paper & \cite{example2} \\  \hline
  $\varphi\neq0$ & \cite{example0,phi,example1,example2} & easy (many) \\
  \hline\hline
\end{tabular}
\end{center}

Feller-Krcatovich~\cite{recursive} and the author~\cite{example2} proved that the~$\Upsilon$-invariant and~$\varphi$-invariant of torus knots satisfy certain recursive formulas
(see Propositions~\ref{Upsilon recursive} and~\ref{phi recursive} for details);
using these, knots with vanishing~$\Upsilon$-invariant or~$\varphi$-invariant can be constructed from connected sums of torus knots with suitable parameters.
This approach was used in~\cite{example1} and~\cite{example2} to provide desired knots.
Those knots have only one of $\Upsilon$ and~$\varphi$ zero, because the recursive formulas for the two invariants are different.
One may try to build a connected sum of torus knots that simultaneously fits the two recursive relations,
but for which there is a good chance to\linebreak obtain a knot stably equivalent~\cite{survey} to the unknot (e.g. $T_{2,5}\#-T_{2,3}\#-T_{2,3}$) and hence has vanishing~$\varepsilon$-invariant.
In our construction here, we carefully choose parameters of torus knots and manage to produce a family of connected sums that meet the requirements in the theorem.

\emph{Acknowledgments.} The author wishes to express sincere thanks to Professor Charles Livingston for carefully reading a draft of this paper
and detailed suggestions on grammar.

\section{Preliminaries}

We assume the reader is familiar with the knot Floer complex, the~$\varepsilon$-invariant, the~$\Upsilon$-invariant and the~$\varphi$-invariant.
For a knot $K\subset S^3$, these are denoted by~$\mathit{CFK}^\infty(K)$,~$\varepsilon(K)$,~$\Upsilon_K(t)$\linebreak and~$\varphi(K)=(\varphi_j(K))_{j=1}^\infty$, respectively.
The chain complex $\mathit{CFK}^\infty(K)$ is doubly filtered, free and finitely generated over~$\mathbb{Z}_2[U,U^{-1}]$.
Up to filtered chain homotopy equivalence, this complex is an invariant of~$K$.

We are considering three smooth concordance invariants.
For each knot~$K$, the invariant~$\varepsilon(K)$ is an integer in the set~$\{-1,0,1\}$;
the invariant~$\Upsilon_K(t)$ is a piecewise linear function on~$[0,2]$;
and~$\varphi(K)$ is a sequence of integers with at most finitely many nonzero terms.
Moreover,~$\Upsilon_K(t)$ and~$\varphi(K)$ give homomorphisms from the smooth concordance group~$\mathcal{C}$.

\subsection{Recursive formulas}

Now we state two facts that will produce many knots with\linebreak vanishing~$\Upsilon$-invariant or~$\varphi$-invariant.

\begin{Prop}\label{Upsilon recursive}\emph{(\cite[Proposition 2.2]{recursive})} Suppose $p$ and $r$ are relatively prime positive integers and $k$ is a nonnegative integer.
Then $\Upsilon_{T_{p,kp+r}}(t)=\Upsilon_{T_{r,p}}(t)+k\Upsilon_{T_{p,p+1}}(t)$.\end{Prop}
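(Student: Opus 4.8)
The plan is to reduce the stated family of identities to a single-step recursion and then induct on $k$. Note first that $\gcd(p,kp+r)=\gcd(p,r)=1$, so every knot in sight is a genuine torus knot. Since $\Upsilon$ is a homomorphism on $\mathcal{C}$, the right-hand side equals $\Upsilon_{T_{r,p}\,\#\,k\,T_{p,p+1}}$, and by the same additivity the whole family follows by induction from the single case
\[
\Upsilon_{T_{p,\,Q+p}}(t)=\Upsilon_{T_{p,Q}}(t)+\Upsilon_{T_{p,p+1}}(t),\qquad \gcd(p,Q)=1 .
\]
Both sides are continuous, piecewise linear, symmetric about $t=1$, and vanish at the endpoints of $[0,2]$, so it suffices to match them on $[0,1]$; as an initial consistency check the slopes at $t=0$ agree, since $\Upsilon'_K(0)=-\tau(K)=-g(K)$ for these positive torus knots and $(p-1)(Q+p-1)=(p-1)(Q-1)+(p-1)p$.

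To go beyond the leading order I would invoke the description of $\Upsilon$ for an L-space knot in terms of its numerical semigroup (the staircase formula of Ozsv\'{a}th--Stipsicz--Szab\'{o}, made explicit by Borodzik--Livingston): the function $\Upsilon_{T_{p,q}}$ is determined by the counting function of the semigroup $S_{p,q}=\langle p,q\rangle$, with its breakpoints and slopes read off from the successive semigroup elements and gaps. The target identity then becomes a purely combinatorial identity relating the counting functions of $\langle p,Q+p\rangle$, $\langle p,Q\rangle$ and $\langle p,p+1\rangle$.

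The combinatorial engine is the Ap\'{e}ry normal form: writing each element of $\langle p,Q\rangle$ uniquely as $ap+bQ$ with $a\ge 0$ and $0\le b\le p-1$, one checks directly that
\[
\langle p,\,Q+p\rangle=\{\,ap+bQ\ :\ 0\le b\le p-1,\ a\ge b\,\},
\]
that is, passing from $T_{p,Q}$ to $T_{p,Q+p}$ imposes exactly the constraint $a\ge b$ on the normal forms, and this constraint is precisely the one cut out by the triangular semigroup $\langle p,p+1\rangle$. This is what should convert into the additivity of the associated $\Upsilon$ functionals.

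The hard part will be making this matching exact rather than asymptotic. Because $T_{r,p}\,\#\,k\,T_{p,p+1}$ is not an L-space knot, its $\mathit{CFK}^\infty$ is not a staircase, so there is no direct filtered chain homotopy equivalence to exploit: the identity lives only at the level of the $\Upsilon$ functional. Concretely, $\Upsilon_K(t)$ is the maximum over staircase generators of a $t$-dependent linear functional, and I must show that for every $t\in[0,1]$ the maximizing generator for $T_{p,Q+p}$ corresponds, under the normal-form shearing above, to the pair of maximizers for $T_{p,Q}$ and $T_{p,p+1}$, and that the breakpoints coincide (none being created or destroyed in the sum). This bookkeeping---tracking which semigroup element governs each linear piece and verifying that the slopes increment correctly as $t$ crosses each breakpoint---is where the genuine work lies.
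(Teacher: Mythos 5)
First, a point of context: the paper does not prove this proposition at all --- it is imported verbatim from Feller--Krcatovich \cite{recursive}, so your attempt can only be measured against that source. Much of your setup is correct and is in fact aligned with how \cite{recursive} proceeds. The reduction by induction on $k$ to the single-step identity $\Upsilon_{T_{p,Q+p}}(t)=\Upsilon_{T_{p,Q}}(t)+\Upsilon_{T_{p,p+1}}(t)$ is valid (the base case $k=0$ is the tautology $T_{p,r}=T_{r,p}$, and $\gcd(p,kp+r)=1$ at every stage), and your Ap\'{e}ry normal-form computation checks out: writing $n=ap+bQ$ with $0\leqslant b\leqslant p-1$, membership in $\langle p,Q\rangle$ is the condition $a\geqslant0$, membership in $\langle p,Q+p\rangle$ is the condition $a\geqslant b$, and this is formally the same condition that cuts out $\langle p,p+1\rangle$. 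That shearing of normal forms is indeed the combinatorial kernel of the known proof, which feeds the semigroup data into a closed formula expressing $\Upsilon$ of an L-space knot as an extremum of $t$-linear functionals over the staircase vertices.

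The genuine gap is that your write-up stops exactly where the proof has to happen, and you say so yourself (``this bookkeeping \ldots is where the genuine work lies''). Nothing in the proposal connects the normal-form observation to the values of $\Upsilon$: you never write down the L-space-knot formula for $\Upsilon$, never identify the staircase vertices of $T_{p,Q+p}$ with data coming from $T_{p,Q}$ and $T_{p,p+1}$, and never carry out the matching of maximizers and breakpoints that you correctly identify as the crux. The preliminary checks you do perform (slope $-\tau$ at $t=0$, symmetry about $t=1$, vanishing at the endpoints) are consistency checks only; two piecewise-linear functions on $[0,2]$ can satisfy all of them and still differ. So what you have is a correct reduction plus the right combinatorial ingredient, but the step that converts the semigroup identity into an identity of piecewise-linear functions --- which is the entire content of \cite[Proposition 2.2]{recursive} --- is missing. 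One remark of yours is worth emphasizing as correct and important: the identity can only hold at the level of the $\Upsilon$ functional, not at the level of $\varepsilon$-equivalence classes of $\mathit{CFK}^\infty$; indeed, if it held at that finer level, the computations of Section 3 of this paper would force $[\![J_n\#-K_n\#L_n]\!]=0$, contradicting Proposition \ref{nonvanish}, so no argument by filtered chain homotopy or stable equivalence could ever succeed.
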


\begin{Prop}\label{phi recursive}\emph{(\cite[Theorem 1.4]{example2})} Suppose $p$ and $r$ are relatively prime positive integers with~$r<p$ and~$k$ is a nonnegative integer.
Then $$\varphi(T_{p,kp+r})=(k+1)\varphi(T_{r,p})+k\varphi(T_{p-r,p})+k(\varphi(T_{p,p+1})-\varphi(T_{p-1,p})).$$\end{Prop}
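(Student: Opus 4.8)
The plan is to reduce the statement to a single ``increment'' identity and then to verify that identity combinatorially from the staircase description of $\mathit{CFK}^\infty(T_{p,q})$. Since each positive torus knot $T_{p,q}$ is an L-space knot, $\mathit{CFK}^\infty(T_{p,q})$ is filtered chain homotopy equivalent to a staircase complex whose step lengths are read off from the numerical semigroup $\langle p,q\rangle$ (equivalently, from the exponents of $\Delta_{T_{p,q}}$), and each $\varphi_j(T_{p,q})$ is a prescribed count of these step lengths. I would begin by recording this explicit formula, writing $\varphi_j(T_{p,q})$ as a function of the step-length multiset of the staircase of $T_{p,q}$.

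First I would set up an induction on $k$. The base case $k=0$ asserts $\varphi(T_{p,r})=\varphi(T_{r,p})$, which holds since $T_{p,r}=T_{r,p}$. Subtracting the proposed formula for consecutive values of $k$, the inductive step collapses to the assertion that the increment
$$\varphi(T_{p,kp+r})-\varphi(T_{p,(k-1)p+r})=\varphi(T_{r,p})+\varphi(T_{p-r,p})+\varphi(T_{p,p+1})-\varphi(T_{p-1,p})$$
is independent of $k\geq 1$ and equals the stated fixed vector. Thus everything reduces to understanding how the staircase of $T_{p,q}$ changes when the second parameter is increased by $p$, that is, when $q\mapsto q+p$ with $q\equiv r\pmod p$.

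The engine for the increment is the semigroup analysis underlying Feller--Krcatovich's Proposition~\ref{Upsilon recursive}. Passing from the gap data of $\langle p,q\rangle$ to that of $\langle p,q+p\rangle$ alters the staircase step lengths in a way that stabilizes once $k\geq 1$, depending only on the residue $r$ and on $p$ and not on $k$ itself. I would make this modification explicit at the level of step lengths and then compute, for each $j$, the change $\varphi_j(T_{p,q+p})-\varphi_j(T_{p,q})$, matching it against the length-$j$ count of the fixed right-hand vector.

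The main obstacle is precisely this length-by-length bookkeeping. Unlike $\Upsilon$, which depends only on aggregate gap data and therefore obeys the clean recursion of Proposition~\ref{Upsilon recursive}, the invariant $\varphi$ records the full multiset of step lengths, so the increment must be controlled for every $j$ separately. The correction term $\varphi(T_{p,p+1})-\varphi(T_{p-1,p})$ has no counterpart in the $\Upsilon$ recursion: at the level of $\Upsilon$ the analogous combination $\Upsilon_{T_{r,p}}+\Upsilon_{T_{p-r,p}}-\Upsilon_{T_{p-1,p}}$ essentially cancels, consistent with the genus identity $\tfrac{(r-1)(p-1)}{2}+\tfrac{(p-r-1)(p-1)}{2}=\tfrac{(p-2)(p-1)}{2}$, whereas for $\varphi$ the finer step-length data do not cancel and a genuine boundary correction survives. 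Identifying the steps near the junction of the modified block that deviate from the ideal gluing, and showing that they assemble into exactly $\varphi(T_{p,p+1})-\varphi(T_{p-1,p})$ while the bulk contributes $\varphi(T_{r,p})+\varphi(T_{p-r,p})$, is the technical heart of the argument.
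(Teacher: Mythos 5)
Note first that this paper does not prove Proposition~\ref{phi recursive} at all: it imports it from \cite{example2}, where the proof is a direct semigroup-to-staircase computation for $T_{p,kp+r}$ with arbitrary $k$ (the same method is replayed in this paper's Lemma~\ref{segment} and the computations of Lemma~\ref{mod3epsilon}). Your framework agrees with that source in its ingredients: $\varphi_j$ of an L-space knot counts staircase steps of length $j$, the staircase is read off the semigroup $\langle p,q\rangle$ via Equation~\eqref{Sb}, and your reduction by induction on $k$ to the increment identity $\varphi(T_{p,kp+r})-\varphi(T_{p,(k-1)p+r})=\varphi(T_{r,p})+\varphi(T_{p-r,p})+\varphi(T_{p,p+1})-\varphi(T_{p-1,p})$ is correct algebra with the correct base case $T_{p,r}=T_{r,p}$.

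However, your writeup stops exactly where the theorem begins: the length-by-length verification of the increment, which you yourself label ``the technical heart,'' is never carried out, and the picture you propose for carrying it out is wrong. The staircase of $T_{p,(k+1)p+r}$ is not obtained from that of $T_{p,kp+r}$ by gluing a fixed block at a ``junction'': adding $p$ to $q$ changes the gap structure of $\langle p,q\rangle$ in every interval $[lp,(l+1)p]$, so the step sequence is reorganized globally, not locally. Concretely, for $p=5$, $r=2$, the step sequences computed from Equation~\eqref{Sb} begin $(1,4,1,1,1,2,\dots)$ for $T_{5,7}$ but $(1,4,1,4,1,1,\dots)$ for $T_{5,12}$ --- already the fourth step changes from $1$ to $4$ --- so no boundary correction localized near a junction can account for the difference; only the \emph{multiset} of step lengths relevant to $\varphi$ has a $k$-independent increment (here $\{1^{10},2^{2},4^{2}\}$ for $T_{5,12}$ versus $\{1^{6},2,4\}$ for $T_{5,7}$, with difference $\{1^{4},2,4\}$ matching $\varphi(T_{2,5})+\varphi(T_{3,5})+\varphi(T_{5,6})-\varphi(T_{4,5})$). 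A correct execution must therefore compute the full step-length multiset of $T_{p,kp+r}$ for arbitrary $k$, via an interval-by-interval description of $\langle p,kp+r\rangle$ generalizing Lemma~\ref{segment}, and compare counts for each $j$; moreover the first increment ($k=1$), taken against $T_{r,p}$ whose staircase comes from the structurally different semigroup $\langle r,p\rangle$ with both generators less than $p$, requires its own verification --- your assertion that the modification ``stabilizes once $k\geq1$'' assumes rather than proves precisely this.
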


Thus any knot of the form $T_{p,kp+r}\#-T_{r,p}\#-kT_{p,p+1}$ has vanishing~$\Upsilon$-invariant,
where~$-K$ means the mirror image of the knot~$K$ with reversed orientation
(representing the inverse element of~$K$ in~$\mathcal{C}$) and~$kK$ means the connected sum of~$k$ copies of~$K$.
Similarly, any knot of the\linebreak form $T_{p,kp+r}\#-(k+1)T_{r,p}\#-kT_{r,p}\#-k(T_{p,p+1}\#-T_{p-1,p})$ has vanishing~$\varphi$-invariant.

\subsection{Conventions and Notations}

The definition of the~$\varepsilon$-invariant can be extended to a class~$\mathfrak{C}$ of chain complexes satisfying certain homological conditions (see~\cite[Definition~2.2]{1nn1}),
so that~$\varepsilon(K)=\varepsilon(\mathit{CFK}^\infty(K))$ for all knots~$K$.
The relation~$\sim_\varepsilon$ defined by~$C\sim_\varepsilon C'\Leftrightarrow\varepsilon(C\otimes C'^*)=0$ is an equivalence relation on~$\mathfrak{C}$,
making $\mathcal{CFK}_\mathrm{alg}:=\mathfrak{C}/\sim_\varepsilon$ an abelian group with the tensor product operation as the addition and the dual as the negative.
The binary relation defined by~$[C]>[C']\Leftrightarrow\varepsilon(C\otimes C'^*)=1$ gives a total order on~$\mathcal{CFK}_\mathrm{alg}$
that respects the addition operation~\cite[Proposition 4.1]{ordered},
where the pair of brackets denotes the~$\varepsilon$-equivalence class of the complex.
Hence~$\mathcal{CFK}_\mathrm{alg}$ becomes a \emph{totally ordered abelian group}.
It includes a\linebreak subgroup~$\mathcal{CFK}:=\{[\mathit{CFK}^\infty(K)]\mid K\text{ is a knot}\}$,
which is a quotient group of~$\mathcal{C}$ with the quotient map~$K\mapsto[\mathit{CFK}^\infty(K)]$.

For two elements $g,h\geqslant0$ of a totally ordered abelian group, where~$0$ is the identity element, we write~$g\ll h$ if~$n\cdot g<h$ for any natural number~$n$.

Given an even number of positive integers~$b_1,\cdots,b_{2m}$ that satisfy $b_i=b_{2m+1-i}$ for~$i=1,\cdots,2m$,
one can define a \emph{staircase complex}~$\mathrm{St}(b_1,\cdots,b_{2m})$ in $\mathfrak{C}$.
It is freely generated over~$\mathbb{Z}_2[U,U^{-1}]$ by a basis $x_0,x_1,\cdots,x_{2m}$
with the filtration level of $x_{2i}$ being~$(\sum_{j=1}^ib_{2j-1},\sum_{j=i+1}^mb_{2j})$\linebreak for $i=0,1,\cdots,m$
and the filtration level of $x_{2i-1}$ being~$(\sum_{j=1}^ib_{2j-1},\sum_{j=i}^mb_{2j})$ for $i=1,\cdots,m$.
The differential is defined by $\partial x_i=x_{i-1}+x_{i+1}$ for odd~$i$ and~$\partial x_i=0$ for even~$i$.
The\linebreak $\varepsilon$-equivalence class of the complex~$\mathrm{St}(b_1,\cdots,b_{2m})$ is denoted by~$[b_1,\cdots,b_m]$.
Note that we adopt the notation used in~\cite{filtration} rather than in~\cite{1nn1,example1,example2}.
For example,~$[\mathrm{St}(1,2,2,1)]$ is denoted by~$[1,2]$, not by~$[1,2,2,1]$.
We abbreviate the finite sequence $\underbrace{a_1,\cdots,a_n,\cdots,a_1,\cdots,a_n}_{k\text{ copies of }a_1,\cdots,a_n}$ to~$(a_1,\cdots,a_n)^k$.
For instance,~$[(1,2)^3]=[1,2,1,2,1,2]=[\mathrm{St}((1,2)^3,(2,1)^3)]$.
The~$\varepsilon$-equivalence class of any staircase complex is positive in~$\mathcal{CFK}_\mathrm{alg}$.

Any positive torus knot~$T_{p,q}$, where~$p$ and~$q$ are relatively prime positive integers,
has a\linebreak \emph{semigroup}~$S(T_{p,q})=\langle p,q\rangle:=\{px+qy\mid x,y\in\mathbb{Z}_{\geqslant0}\}$.
When~$T_{p,q}$ is nontrivial,~$\mathit{CFK}^\infty(T_{p,q})$ is a staircase complex~$\mathrm{St}(b_1,\cdots,b_{2m})$
and the semigroup determines~$b_1,\cdots,b_{2m}$ by
\begin{equation}\label{Sb}
\begin{split}
b_1&=1,\\
\text{and inductively}&\\
b_i&=\min\{a>0\mid b_1+\cdots+b_{i-1}+a\in S(T_{p,q})\}\text{ for even }i,\\
b_i&=\min\{a>0\mid b_1+\cdots+b_{i-1}+a\not\in S(T_{p,q})\}\text{ for odd }i;\\
m&=\max\{i>0\mid b_1,\cdots,b_i\text{ are well defined}\}.
\end{split}
\end{equation}

A result of~\cite[Theorem 1.2]{genus} states\begin{equation}\label{Lgenus}b_1+\cdots+b_m=b_{m+1}+\cdots+b_{2m}=\frac{(p-1)(q-1)}{2}.\end{equation}

\subsection{Some facts in~$\mathcal{CFK}_\mathrm{alg}$}

We will use the results of~\cite[Lemmas 4.7, 6.3, 6.4]{ordered} as listed in the next lemma.

\begin{Lem}\label{dominate}The following statements are true.
\begin{enumerate}
\item\label{domination implies independence} If~$0<g_1\ll g_2\ll g_3\ll\cdots$ in a totally ordered abelian group,
then~$g_1,g_2,g_3,\cdots$ are linearly independent.
\item\label{a1greater} If positive integers~$a_1,\cdots,a_m$ and~$b_1,\cdots,b_n$ satisfy~$a_1>b_1$,
then $[a_1,\cdots,a_m]\ll[b_1,\cdots,b_n]$.
\item\label{a2greater} If positive integers~$a_1,\cdots,a_m$ and~$b_1,\cdots,b_n$ satisfy~$a_1=b_1$ and~$a_2<b_2$,\linebreak
then~$[a_1,\cdots,a_m]\ll[b_1,\cdots,b_n]$.
\end{enumerate}\end{Lem}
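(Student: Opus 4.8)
The plan is to prove each of the three statements using the structure of the totally ordered abelian group~$\mathcal{CFK}_\mathrm{alg}$, which the excerpt has already established.

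For statement~\eqref{domination implies independence}, I would argue by contradiction. Suppose a nontrivial linear relation $\sum_{i=1}^N c_i g_i = 0$ holds with the~$c_i$ integers, not all zero; let~$k$ be the largest index with~$c_k\neq0$. Rearranging, I would isolate the term~$c_k g_k$ and bound the remaining sum using the hypotheses~$g_1\ll g_2\ll\cdots$. The key observation is that each~$g_i$ with~$i<k$ is dominated by~$g_k$ in the sense that any integer multiple of~$g_i$ stays below~$g_k$; summing finitely many such terms, the total contribution of~$\sum_{i<k}c_i g_i$ in absolute value is still strictly less than~$|c_k|\,g_k$ (here I would use that~$g_i\ll g_k$ means~$n\cdot g_i<g_k$ for all~$n$, applied with~$n$ large enough to absorb all the coefficients). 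This forces~$c_k g_k\neq-\sum_{i<k}c_i g_i$ unless~$c_k=0$, contradicting the choice of~$k$. The main point to handle carefully is that the order respects addition, so inequalities can be added and multiplied by positive integers.

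For statements~\eqref{a1greater} and~\eqref{a2greater}, the goal is to show a domination~$\ll$ between $\varepsilon$-equivalence classes of staircase complexes, so I would unwind the definition: I must show that for every natural number~$n$, the class~$n\cdot[a_1,\dots,a_m]$ is strictly less than~$[b_1,\dots,b_n]$ in~$\mathcal{CFK}_\mathrm{alg}$. Since addition in this group is the tensor product and the order is detected by the~$\varepsilon$-invariant via $[C]>[C']\Leftrightarrow\varepsilon(C\otimes C'^*)=1$, the statement reduces to computing the sign of~$\varepsilon$ on a tensor product of staircase complexes and their duals. The natural strategy is to compare the two classes via the leading-parameter comparison: the hypothesis~$a_1>b_1$ (respectively~$a_1=b_1$ and~$a_2<b_2$) should control which complex dominates, because~$b_1$ records the length of the first horizontal/vertical step of the staircase and hence governs the coarsest filtration behavior.

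The hard part will be statements~\eqref{a1greater} and~\eqref{a2greater}: translating the combinatorial hypotheses on the parameters~$a_i,b_i$ into a verified sign computation for~$\varepsilon$ on the relevant tensor product, uniformly in the multiplicity~$n$. The cleanest route is presumably to invoke the reduction lemmas of~\cite{ordered} that identify the~$\varepsilon$-order of staircase classes with a lexicographic-type comparison of their parameter sequences, so that~$a_1>b_1$ immediately yields domination and~$a_1=b_1,\,a_2<b_2$ yields domination at the next level; the technical burden is then confined to checking that tensoring with~$n$ copies does not disturb the comparison of the first one or two parameters, which follows because the first step of a tensor product of staircases is governed by the minimum of the first steps. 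Since these three facts are quoted verbatim from~\cite[Lemmas 4.7, 6.3, 6.4]{ordered}, in practice I would cite those lemmas directly rather than reprove the sign computations, and use statement~\eqref{domination implies independence} as the main tool later to extract a~$\mathbb{Z}^\infty$ subgroup from an explicitly constructed~$\ll$-chain of knots.
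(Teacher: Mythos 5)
Your proposal is correct and matches the paper's treatment: the paper offers no proof of this lemma at all, quoting all three statements verbatim from \cite[Lemmas 4.7, 6.3, 6.4]{ordered}, which is exactly what you ultimately do for parts (ii) and (iii). Your self-contained argument for part (i) --- isolating the highest-index nonzero coefficient $c_k$ and absorbing the lower terms by applying $g_{k-1}\ll g_k$ with the multiplier $\sum_{i<k}|c_i|$ --- is a correct standard argument and goes slightly beyond what the paper records.
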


We will also use~\cite[Lemmas 3.1, 3.2, 4.2]{filtration}, as stated next.

\begin{Lem}\label{split}The following statements are true.
\begin{enumerate}
\item\label{squeeze} Let $a_i,b_j$ be positive integers for $i=1,\cdots,m$ and $j=1,\cdots,n$.
If~$m$ is even\linebreak and $\max\{a_i\mid i\text{ is odd}\}\leqslant b_j\leqslant\min\{a_i\mid i\text{ is even}\}$ for all $j=1,\cdots,n$,
then $$[a_1,\cdots,a_m]+[b_1,\cdots,b_n]=[a_1,\cdots,a_m,b_1,\cdots,b_n].$$
\item\label{riffle} Let $a,b,c_1,\cdots,c_k$ be positive integers and $m,n_1,\cdots,n_k$ be nonnegative integers.\linebreak
If~$a\leqslant c_j\leqslant b$ for all $j=1,\cdots,k$ and $m\leqslant\min\{n_1,\cdots,n_k\}$,
then $$[(1,a)^m,1,b]+[(1,a)^{n_1},1,c_1,\cdots,(1,a)^{n_k},1,c_k]=[(1,a)^m,1,b,(1,a)^{n_1},1,c_1,\cdots,(1,a)^{n_k},1,c_k].$$
\item\label{shorter} Let $a,b,c$ be positive integers and $m,n$ be nonnegative integers.
If~$a<b$,~$a\leqslant c$ and $m<n$ or if~$a\leqslant c<b$ and $m=n$,
then~$[(1,a)^n,1,c]\ll[(1,a)^m,1,b]$.
\end{enumerate}\end{Lem}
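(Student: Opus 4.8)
The plan is to read all three parts as purely algebraic statements in the totally ordered abelian group $\mathcal{CFK}_\mathrm{alg}$: parts~(i) and~(ii) assert that a tensor product of two staircase complexes is $\varepsilon$-equivalent to a single longer staircase, while part~(iii) is an order comparison that I would deduce from~(i), (ii) and Lemma~\ref{dominate}.

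For the equalities in~(i) and~(ii) I would exhibit an explicit filtered chain homotopy equivalence. First I would write out the tensor product, say $\mathrm{St}(a_1,\ldots,a_m,a_m,\ldots,a_1)\otimes\mathrm{St}(b_1,\ldots,b_n,b_n,\ldots,b_1)$ for~(i), on the product basis $\{x_i\otimes y_j\}$, reading each generator's bifiltration level off the recipe~\eqref{Sb} and computing the differential by the Leibniz rule. This tensor product is a two–dimensional grid of generators whose arrows run along the edges, and the aim is to reduce it, by filtered Gaussian elimination, to the target staircase plus a direct sum of acyclic two–step ``box'' summands. Since $\varepsilon$ is a filtered chain homotopy invariant and is unchanged by box summands, this yields the asserted equality of classes. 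The hypotheses do the essential work: the squeezing condition $\max\{a_i\mid i\text{ odd}\}\le b_j\le\min\{a_i\mid i\text{ even}\}$ in~(i), and $a\le c_j\le b$ with $m\le n_j$ in~(ii), are exactly what force the steps of the second staircase to fit between those of the first, so that every cancellation in the elimination is filtered and the surviving generators assemble into precisely the concatenated staircase. Checking that these inequalities guarantee a clean cancellation, with nothing but the target staircase and boxes left over, is the main technical burden for these two parts; the tracking of filtration levels through the elimination is routine but must be done with care.

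For the domination statement~(iii) I would first apply~(ii) repeatedly to collapse multiples into single staircases: taking the distinguished large entry of each block to be $c$, the hypotheses of~(ii) hold because $a\le c$ and $n\le n$, so $N\cdot[(1,a)^n,1,c]=[\,((1,a)^n,1,c)^N\,]$ for every $N$. By the definition of $\ll$ this reduces the claim to showing that, for every $N$, the single staircase $[\,((1,a)^n,1,c)^N\,]$ is strictly less than $[(1,a)^m,1,b]$. Comparing the two defining sequences, they agree on the initial segment $(1,a)^m,1$ and first differ at the next (even) position, where the left-hand value — namely $a$ when $m<n$ and $a<b$, or $c$ when $m=n$ and $a\le c<b$ — is strictly smaller than $b$; note that this comparison does not depend on $N$.

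The remaining step, and what I expect to be the main obstacle, is a \emph{refined comparison} (needed when $m\ge1$): two staircase classes sharing the initial block $(1,a)^m,1$ and with the left one strictly smaller at the following even position satisfy left~$<$~right. When $m=0$ the second entries already differ and Lemma~\ref{dominate} applies directly, but for $m\ge1$ this is genuinely stronger than Lemma~\ref{dominate}, which only tests the first two entries, and it cannot be obtained by splitting off a common summand through~(i)/(ii), since the entry $b>a$ obstructs any such split of the right-hand staircase. I would therefore settle it by a direct computation in the spirit of the filtration techniques of~\cite{filtration}, evaluating $\varepsilon\big(\mathrm{St}(((1,a)^n,1,c)^N)\otimes\mathrm{St}((1,a)^m,1,b)^*\big)$ and showing it equals $-1$ uniformly in~$N$: the shared prefix localizes the distinguishing homology generator, and the strict drop at the first differing even step should fix the sign. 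Establishing this sign computation for all $N$ at once is the crux of~(iii).
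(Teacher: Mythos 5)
The first thing to note is that the paper contains no proof of this lemma at all: it is imported verbatim from \cite{filtration} (Lemmas 3.1, 3.2 and 4.2), so there is no argument in the paper to compare yours against; your attempt must stand on its own or reconstruct the proofs in that reference. For parts~\ref{squeeze} and~\ref{riffle} your strategy --- expand the tensor product of the two staircase complexes and split off the concatenated staircase by a filtered change of basis, the complement being irrelevant to $\varepsilon$ --- is sound in outline and is essentially the standard way such identities are established. (One small simplification you could exploit: once the concatenated staircase splits off as a filtered direct summand, the complement is automatically acyclic, since both the tensor product and the staircase have homology $\mathbb{Z}_2[U,U^{-1}]$; you do not need to verify it consists literally of ``box'' summands.) You leave the actual cancellation scheme, where the squeeze and riffle inequalities must enter, unexecuted, but there is no conceptual obstruction there.

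Part~\ref{shorter} is where the proposal has a genuine gap. Your reduction via part~\ref{riffle} to showing $[((1,a)^n,1,c)^N]<[(1,a)^m,1,b]$ for every $N$ is correct, and you rightly observe that for $m\geqslant1$ Lemma~\ref{dominate} cannot decide this comparison (both classes begin $1,a$), so a comparison principle at the first differing even position is required. But that principle \emph{is} statement~\ref{shorter}: your proposal does not prove it, it only asserts that a computation of $\varepsilon\bigl(\mathrm{St}(((1,a)^n,1,c)^N,\cdots)\otimes\mathrm{St}((1,a)^m,1,b,\cdots)^*\bigr)=-1$ ``should'' succeed because ``the shared prefix localizes the distinguishing homology generator'' and ``the strict drop \ldots should fix the sign.'' No mechanism is supplied: you never identify the generator of vertical homology in this tensor product, never exhibit the horizontal differentials that the definition of $\varepsilon$ tests, and never set up an induction or uniform argument in $N$ (the complex whose $\varepsilon$ you must compute grows with $N$, so ``the comparison does not depend on $N$'' is an observation about the indexing sequences, not about the computation). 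Since everything else in~\ref{shorter} is bookkeeping, this missing sign computation is not a detail but the entire content of the statement; as written, your treatment of~\ref{shorter} in effect restates the claim in the language of $\varepsilon$ and expresses confidence that it holds. Closing it requires the kind of explicit analysis carried out in \cite{filtration}: cancel the common prefix, locate the distinguished generators where the two staircases first diverge, and verify that the resulting configuration forces $\varepsilon=-1$ independently of the tail and of $N$.
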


\emph{Remark.} When~$m=0$, the notation~$[(1,a)^m,1,b]$ means~$[1,b]$
and the conclusions in~\ref{riffle} and~\ref{shorter} can be deduced from~\ref{squeeze} and Lemma~\ref{dominate}~\ref{a2greater}.

\begin{Ex}\label{121n}Taking~$a=2$, Lemma~\ref{dominate}~\ref{shorter} gives
\begin{align*}
\ll&\cdots\ll\cdots\\
\ll&[(1,2)^3,1,3]\ll[(1,2)^3,1,4]\ll[(1,2)^3,1,5]\ll\cdots\\
\ll&[(1,2)^2,1,3]\ll[(1,2)^2,1,4]\ll[(1,2)^2,1,5]\ll\cdots\\
\ll&[1,2,1,3]\ll[1,2,1,4]\ll[1,2,1,5]\ll\cdots\\
\ll&[1,3]\ll[1,4]\ll[1,5]\ll\cdots
\end{align*}
Moreover, Lemma~\ref{dominate}~\ref{riffle} implies that the sum of any (finitely many) elements above is the concatenation in the order from the greatest to the least ones.
For example,$$[(1,2)^2,1,3]+[1,9]+[1,2,1,6]=[1,9,1,2,1,6,(1,2)^2,1,3].$$
\end{Ex}

\section{Computations}

For any nonnegative integer~$n$, let $$J_n=(T_{6n+5,6n+8}\#-T_{6n+2,6n+5})\#-(T_{6n+4,6n+7}\#-T_{6n+1,6n+4}),$$
$$K_n=(T_{6n+5,6n+7}\#-T_{6n+3,6n+5})\#-(T_{6n+3,6n+5}\#-T_{6n+1,6n+3})$$
and $$L_n=T_{6n+4,6n+5}\#-2T_{6n+3,6n+4}\#T_{6n+2,6n+3}.$$
We will show that the knot~$J_n\#-K_n\#L_n$ has vanishing~$\Upsilon$-invariant and~$\varphi$-invariant but\linebreak nonvanishing~$\varepsilon$-invariant when~$n>0$.

\subsection{The $\Upsilon$-invariant and~$\varphi$-invariant vanish}

Applying Lemma~\ref{Upsilon recursive} with~$k=1$ and applying it again with~$r=2$ or~$3$, it is easy to see the following equalities.

\begin{Lem}\label{mod2mod3Upsilon}For any integer~$n\geqslant0$, we have$$\Upsilon_{T_{3n+1,3n+4}}(t)=n\Upsilon_{T_{3,4}}(t)+\Upsilon_{T_{3n+1,3n+2}}(t)$$
$$\Upsilon_{T_{3n+2,3n+5}}(t)=\Upsilon_{T_{2,3}}(t)+n\Upsilon_{T_{3,4}}(t)+\Upsilon_{T_{3n+2,3n+3}}(t),$$
and $$\Upsilon_{T_{2n+1,2n+3}}(t)=n\Upsilon_{T_{2,3}}(t)+\Upsilon_{T_{2n+1,2n+2}}(t).$$\end{Lem}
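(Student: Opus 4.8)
The plan is to derive each of the three formulas directly from the torus knot recursion in Proposition~\ref{Upsilon recursive}, which states that $\Upsilon_{T_{p,kp+r}}(t)=\Upsilon_{T_{r,p}}(t)+k\Upsilon_{T_{p,p+1}}(t)$ whenever $p,r$ are coprime positive integers and $k\geqslant0$. Each target equation is an identity between $\Upsilon$-functions of particular torus knots, so I expect that a single application of the proposition, with the right choice of $p$, $r$ and $k$, will produce each line after a small bookkeeping step.

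First I would handle the third formula $\Upsilon_{T_{2n+1,2n+3}}(t)=n\Upsilon_{T_{2,3}}(t)+\Upsilon_{T_{2n+1,2n+2}}(t)$, since the hint says to apply the proposition with $r=2$. I would set $p=2n+1$ and $r=2$, so that $kp+r=k(2n+1)+2$. To match the second index $2n+3$ I want $k(2n+1)+2=2n+3$, which forces $k=1$; then Proposition~\ref{Upsilon recursive} gives $\Upsilon_{T_{2n+1,2n+3}}(t)=\Upsilon_{T_{2,2n+1}}(t)+\Upsilon_{T_{2n+1,2n+2}}(t)$. It remains to rewrite $\Upsilon_{T_{2,2n+1}}(t)$ as $n\Upsilon_{T_{2,3}}(t)$; this is itself an instance of the recursion with $p=2$, $r=1$, $k=n$, giving $\Upsilon_{T_{2,2n+1}}(t)=\Upsilon_{T_{1,2}}(t)+n\Upsilon_{T_{2,3}}(t)$, and since $T_{1,2}$ is the unknot its $\Upsilon$-function vanishes. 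So this line follows from two applications of the proposition plus the triviality of $T_{1,2}$.

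For the first two formulas I would follow the same pattern with $p=3n+1$ and $p=3n+2$ respectively. For $\Upsilon_{T_{3n+1,3n+4}}(t)$ I set $p=3n+1$, $r=3$; solving $k(3n+1)+3=3n+4$ gives $k=1$, so the recursion yields $\Upsilon_{T_{3n+1,3n+4}}(t)=\Upsilon_{T_{3,3n+1}}(t)+\Upsilon_{T_{3n+1,3n+2}}(t)$, and then I rewrite $\Upsilon_{T_{3,3n+1}}(t)=n\Upsilon_{T_{3,4}}(t)+\Upsilon_{T_{1,3}}(t)=n\Upsilon_{T_{3,4}}(t)$ by applying the proposition with $p=3$, $r=1$, $k=n$ and using that $T_{1,3}$ is the unknot. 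For $\Upsilon_{T_{3n+2,3n+5}}(t)$ I set $p=3n+2$, $r=3$, $k=1$ to get $\Upsilon_{T_{3n+2,3n+5}}(t)=\Upsilon_{T_{3,3n+2}}(t)+\Upsilon_{T_{3n+2,3n+3}}(t)$, then expand $\Upsilon_{T_{3,3n+2}}(t)=n\Upsilon_{T_{3,4}}(t)+\Upsilon_{T_{2,3}}(t)$ via the proposition with $p=3$, $r=2$, $k=n$. Assembling these pieces gives exactly the stated right-hand side.

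I do not anticipate a genuine obstacle here: every step is a direct substitution into Proposition~\ref{Upsilon recursive}, and the only subtlety is the purely clerical one of matching indices and confirming the coprimality hypotheses (e.g.\ $\gcd(2n+1,2)=1$, $\gcd(3n+1,3)=1$, $\gcd(3n+2,3)=1$, all of which hold automatically). The mildly delicate point worth stating explicitly is the vanishing of $\Upsilon$ for the unknots $T_{1,2}$ and $T_{1,3}$, which lets the leftover base terms drop out; since $\Upsilon$ is a concordance homomorphism and the unknot is trivial, these contribute zero. Hence the lemma reduces to assembling these elementary applications of the recursion.
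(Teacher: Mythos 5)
Your proof is correct and follows essentially the same route as the paper, which likewise obtains the lemma by applying Proposition~\ref{Upsilon recursive} once with $k=1$ (with $p=3n+1$, $3n+2$ or $2n+1$) and then a second time to expand the resulting base terms $\Upsilon_{T_{3,3n+1}}$, $\Upsilon_{T_{3,3n+2}}$ and $\Upsilon_{T_{2,2n+1}}$, the leftover unknot contributions vanishing. The paper states this in one sentence as "easy to see"; your write-up just makes the two applications and the coprimality checks explicit.
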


Applying Lemma~\ref{phi recursive} similarly yields the following equalities.

\begin{Lem}\label{mod2mod3phi}For any integer~$n>0$, we have$$\varphi(T_{3n+1,3n+4})=2n\varphi(T_{3,4})+\varphi(T_{3n-2,3n+1})+\varphi(T_{3n+1,3n+2})-\varphi(T_{3n,3n+1}),$$
$$\varphi(T_{3n+2,3n+5})=2\varphi(T_{2,3})+2n\varphi(T_{3,4})+\varphi(T_{3n-1,3n+2})+\varphi(T_{3n+2,3n+3})-\varphi(T_{3n+1,3n+2})$$
and $$\varphi(T_{2n+1,2n+3})=2n\varphi(T_{2,3})+\varphi(T_{2n-1,2n+1})+\varphi(T_{2n+1,2n+2})-\varphi(T_{2n,2n+1}).$$\end{Lem}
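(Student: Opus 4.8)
The plan is to derive each of the three identities by applying Proposition~\ref{phi recursive} exactly twice, the first time to peel off one layer with $k=1$ and the second time to evaluate the resulting small-index torus knot. For the first identity I would begin by writing $T_{3n+1,3n+4}=T_{3n+1,\,1\cdot(3n+1)+3}$, so that Proposition~\ref{phi recursive} applies with $p=3n+1$, $k=1$ and $r=3$; the hypotheses $r<p$ and $\gcd(p,r)=1$ hold precisely because $n>0$ (and $3n+1\equiv1\pmod 3$), which is what forces the standing assumption. Substituting $p-r=3n-2$, $p+1=3n+2$ and $p-1=3n$, this first application gives
$$\varphi(T_{3n+1,3n+4})=2\varphi(T_{3,3n+1})+\varphi(T_{3n-2,3n+1})+\varphi(T_{3n+1,3n+2})-\varphi(T_{3n,3n+1}).$$

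The second step is to evaluate the remaining ``short'' factor $\varphi(T_{3,3n+1})$. Here I would view $T_{3,3n+1}=T_{3,\,n\cdot3+1}$ and apply Proposition~\ref{phi recursive} again, now with $p=3$, $k=n$ and $r=1$. Since $T_{1,3}$ is the unknot we have $\varphi(T_{1,3})=0$, and the two contributions involving $T_{2,3}$ cancel, leaving the clean identity $\varphi(T_{3,3n+1})=n\varphi(T_{3,4})$. Feeding this back into the display above produces the first formula of the lemma, the coefficient $2n$ coming exactly from the $2\varphi(T_{3,3n+1})$ term.

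The other two identities follow the same two-step pattern. For $\varphi(T_{3n+2,3n+5})$ I would first apply the recursion with $p=3n+2$, $k=1$, $r=3$, and then evaluate $\varphi(T_{3,3n+2})$ via $p=3$, $k=n$, $r=2$; again $T_{1,3}$ is the unknot, so that term vanishes while the surviving $T_{2,3}$ contribution yields $\varphi(T_{3,3n+2})=\varphi(T_{2,3})+n\varphi(T_{3,4})$, which accounts for both the $2\varphi(T_{2,3})$ and the $2n\varphi(T_{3,4})$ terms. For $\varphi(T_{2n+1,2n+3})$ I would apply the recursion with $p=2n+1$, $k=1$, $r=2$, and then evaluate $\varphi(T_{2,2n+1})$ via $p=2$, $k=n$, $r=1$; both $T_{1,2}$ terms are unknots and vanish, leaving $\varphi(T_{2,2n+1})=n\varphi(T_{2,3})$.

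I do not expect a serious obstacle: this is essentially bookkeeping with Proposition~\ref{phi recursive}. The only points requiring care are verifying the coprimality and $r<p$ hypotheses at each application (which is what pins down $n>0$) and correctly tracking the auxiliary knots $T_{p-r,p}$, $T_{p,p+1}$ and $T_{p-1,p}$. The one genuine payoff is the cancellation occurring in each second step, where the unknot terms drop out and the $T_{2,3}$ contributions either cancel or collapse, leaving the tidy coefficients stated in the lemma.
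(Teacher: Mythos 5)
Your proposal is correct and is exactly the argument the paper intends: the paper's one-line proof ("Applying Lemma~\ref{phi recursive} similarly...") refers to the same two-step scheme you spell out, first peeling off a layer with $k=1$ and then evaluating the small torus knot factors $T_{3,3n+1}$, $T_{3,3n+2}$, $T_{2,2n+1}$ via a second application of the recursion with $p=3$ or $p=2$. Your bookkeeping of the unknot cancellations and the surviving $T_{2,3}$ and $T_{3,4}$ terms matches the stated formulas.
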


Now we compute the $\Upsilon$-invariant and~$\varphi$-invariant of~$J_n$ and~$K_n$.

\begin{Lem}\label{vanish}For any integer~$n\geqslant0$, we have $\Upsilon_{J_n\#-K_n\#L_n}(t)=0$ and $\varphi(J_n\#-K_n\#L_n)=0$.\end{Lem}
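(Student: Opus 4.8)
The plan is to prove the two vanishing statements separately, since $\Upsilon$ and $\varphi$ are both homomorphisms $\mathcal{C}\to(\text{abelian group})$ and the claim splits as $\Upsilon_{J_n}(t)-\Upsilon_{K_n}(t)+\Upsilon_{L_n}(t)=0$ together with the identical relation for $\varphi$. Because everything is linear, I would first expand $\Upsilon_{J_n}$, $\Upsilon_{K_n}$ and $\Upsilon_{L_n}$ into combinations of $\Upsilon$-invariants of \emph{small} torus knots using the recursive formulas, and then check that the resulting linear combination cancels identically. The whole proof is therefore a bookkeeping verification; the substance lives entirely in the algebra of the recursion.

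For the $\Upsilon$ part, I would feed each torus knot appearing in $J_n$, $K_n$, $L_n$ into Lemma~\ref{mod2mod3Upsilon} (the specialized consequences of Proposition~\ref{Upsilon recursive}). Concretely, writing $J_n$ out gives $\Upsilon_{T_{6n+5,6n+8}}-\Upsilon_{T_{6n+2,6n+5}}-\Upsilon_{T_{6n+4,6n+7}}+\Upsilon_{T_{6n+1,6n+4}}$; each of these four torus knots has the form $T_{3m+2,3m+5}$ or $T_{3m+1,3m+4}$ for an appropriate $m$ (namely $m=2n+1,2n,2n+1,2n$ respectively), so Lemma~\ref{mod2mod3Upsilon} rewrites all of them in terms of $\Upsilon_{T_{2,3}}$, $\Upsilon_{T_{3,4}}$ and $\Upsilon$-invariants of the ``diagonal'' knots $T_{j,j+1}$. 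I would do the same for $K_n$, whose summands are of the odd type $T_{2m+1,2m+3}$, invoking the third identity of Lemma~\ref{mod2mod3Upsilon}, and for the three torus knots in $L_n$, which are again diagonal or near-diagonal. I expect the coefficients of $\Upsilon_{T_{2,3}}$ and $\Upsilon_{T_{3,4}}$ to cancel because the number of ``mod $3$ residue $2$'' and ``residue $1$'' summands is balanced between $J_n$ and $K_n\ominus L_n$, and the leftover diagonal terms $\Upsilon_{T_{j,j+1}}$ should cancel in pairs once the parameters are matched up. The $\varphi$ part runs in exactly the same fashion using Lemma~\ref{mod2mod3phi} in place of Lemma~\ref{mod2mod3Upsilon}; the recursion is longer (it carries extra $T_{p-r,p}$ and $T_{p-1,p}$ correction terms) but still purely linear.

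The main obstacle is not conceptual but combinatorial: one must verify that \emph{every} residual torus-knot invariant cancels, and this requires tracking the parameters carefully so that, for instance, the diagonal term $\Upsilon_{T_{3n+2,3n+3}}$ produced by expanding $T_{6n+2,6n+5}$ really coincides with one produced elsewhere, and likewise for the $\varphi$-specific correction terms $\varphi(T_{3n,3n+1})$, $\varphi(T_{3n+2,3n+3})$, etc. The delicate point in the $\varphi$ computation is that the recursion introduces three families of terms rather than one, so the cancellation is a coincidence engineered by the specific choice of parameters $6n+k$ in the definitions of $J_n$, $K_n$, $L_n$; I would organize the check by grouping terms according to which diagonal knot $T_{j,j+1}$ (or which correction knot) they involve and confirming the signed coefficients sum to zero in each group. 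A clean way to present this is to reduce both identities to the observation that $J_n$ and $K_n\#{-}L_n$ have, after applying the recursions, \emph{the same} expansion into the distinguished building blocks, so their difference vanishes term by term. The case $n=0$ is included in the hypothesis $n\geqslant 0$ and needs only a small remark, since Lemma~\ref{mod2mod3phi} is stated for $n>0$ while the $\varphi$-claim at $n=0$ can be checked by direct substitution of the small torus knots.
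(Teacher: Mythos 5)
Your strategy---split the claim using that $\Upsilon$ and $\varphi$ are homomorphisms, then expand everything through Lemmas~\ref{mod2mod3Upsilon} and~\ref{mod2mod3phi} and check cancellation---is the paper's strategy, and your $\Upsilon$ computation is literally identical to the paper's: expanding the four knots of $J_n$ at indices $2n+1,2n,2n+1,2n$ and those of $K_n$ at $3n+2,3n+1,3n+1,3n$ kills the $\Upsilon_{T_{2,3}}$ and $\Upsilon_{T_{3,4}}$ terms and leaves exactly $\Upsilon_{J_n}(t)-\Upsilon_{K_n}(t)=-\Upsilon_{L_n}(t)$.

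However, the $\varphi$ part does not ``run in exactly the same fashion,'' and this is where your plan, as written, would stumble. Lemma~\ref{mod2mod3Upsilon} is a closed form: only $T_{2,3}$, $T_{3,4}$ and diagonal knots $T_{j,j+1}$ appear on its right-hand side. Lemma~\ref{mod2mod3phi} is not: its right-hand side still contains a non-diagonal knot of the same type ($T_{3n-2,3n+1}$, $T_{3n-1,3n+2}$ or $T_{2n-1,2n+1}$). If you expand all four knots of $J_n$ independently at indices $2n+1$ and $2n$, the surviving non-diagonal correction terms are $+\varphi(T_{6n+2,6n+5})-\varphi(T_{6n-1,6n+2})-\varphi(T_{6n+1,6n+4})+\varphi(T_{6n-2,6n+1})$, with a similar list coming from $K_n$; grouping by knot, these signed coefficients do \emph{not} sum to zero, so the group-by-group verification you describe fails after one round of expansion. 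There are two ways out. One is to iterate the recursion all the way down (it terminates, and the resulting sums telescope), which is laborious. The other is the paper's organization: the knots of $J_n$ and $K_n$ were chosen so that in each parenthesized pair the smaller knot \emph{is} the correction term of the larger one---for instance $T_{6n+2,6n+5}=T_{3(2n+1)-1,3(2n+1)+2}$ is exactly the correction term appearing in the recursion for $T_{6n+5,6n+8}=T_{3(2n+1)+2,3(2n+1)+5}$. Expanding only the larger knot of each pair makes the non-diagonal terms cancel on the spot, with no residue to track. As a bonus, Lemma~\ref{mod2mod3phi} is then only ever invoked at the positive indices $2n+1$, $3n+2$, $3n+1$, so the case $n=0$ is covered uniformly; your separate $n=0$ check is an artifact of expanding at index $2n=0$.
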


\begin{proof}[\emph{\bfseries Proof.}]Using Lemma~\ref{mod2mod3Upsilon}, we know
\begin{align*}
\Upsilon_{J_n}(t)=&(\Upsilon_{T_{3(2n+1)+2,3(2n+1)+5}}(t)-\Upsilon_{T_{3(2n)+2,3(2n)+5}}(t))\\
&-(\Upsilon_{T_{3(2n+1)+1,3(2n+1)+4}}(t)-\Upsilon_{T_{3(2n)+1,3(2n)+4}}(t))\\
=&(\Upsilon_{T_{3,4}}(t)+\Upsilon_{T_{3(2n+1)+2,3(2n+1)+3}}(t)-\Upsilon_{T_{3(2n)+2,3(2n)+3}}(t))\\
&-(\Upsilon_{T_{3,4}}(t)+\Upsilon_{T_{3(2n+1)+1,3(2n+1)+2}}(t)-\Upsilon_{T_{3(2n)+1,3(2n)+2}}(t))\\
=&(\Upsilon_{T_{6n+5,6n+6}}(t)-\Upsilon_{T_{6n+2,6n+3}}(t))-(\Upsilon_{T_{6n+4,6n+5}}(t)-\Upsilon_{T_{6n+1,6n+2}}(t))\end{align*}
and
\begin{align*}
\Upsilon_{K_n}(t)=&(\Upsilon_{T_{2(3n+2)+1,2(3n+2)+3}}(t)-\Upsilon_{T_{2(3n+1)+1,2(3n+1)+3}}(t))\\
&-(\Upsilon_{T_{2(3n+1)+1,2(3n+1)+3}}(t)-\Upsilon_{T_{2(3n)+1,2(3n)+3}}(t))\\
=&(\Upsilon_{T_{2,3}}(t)+\Upsilon_{T_{2(3n+2)+1,2(3n+2)+2}}(t)-\Upsilon_{T_{2(3n+1)+1,2(3n+1)+2}}(t))\\
&-(\Upsilon_{T_{2,3}}(t)+\Upsilon_{T_{2(3n+1)+1,2(3n+1)+2}}(t)-\Upsilon_{T_{2(3n)+1,2(3n)+2}}(t))\\
=&\Upsilon_{T_{6n+5,6n+6}}(t)-2\Upsilon_{T_{6n+3,6n+4}}(t)+\Upsilon_{T_{6n+1,6n+2}}(t).\end{align*}

Hence $\Upsilon_{J_n\#-K_n}(t)=-\Upsilon_{T_{6n+2,6n+3}}(t))+2\Upsilon_{T_{6n+3,6n+4}}(t)-\Upsilon_{T_{6n+4,6n+5}}(t)=-\Upsilon_{L_n}(t)$.

Using Lemma~\ref{mod2mod3phi}, we know
\begin{align*}
\varphi(J_n)=&(\varphi(T_{3(2n+1)+2,3(2n+1)+5})-\varphi(T_{3(2n+1)-1,3(2n+1)+2}))\\
&-(\varphi(T_{3(2n+1)+1,3(2n+1)+4})-\varphi(T_{3(2n+1)-2,3(2n+1)+1}))\\
=&(2\varphi(T_{2,3})+2(2n+1)\varphi(T_{3,4})+\varphi(T_{3(2n+1)+2,3(2n+1)+3})-\varphi(T_{3(2n+1)+1,3(2n+1)+2}))\\
&-(2(2n+1)\varphi(T_{3,4})+\varphi(T_{3(2n+1)+1,3(2n+1)+2})-\varphi(T_{3(2n+1),3(2n+1)+1}))\\
=&(2\varphi(T_{2,3})+\varphi(T_{6n+5,6n+6})-\varphi(T_{6n+4,6n+5}))-(\varphi(T_{6n+4,6n+5})-\varphi(T_{6n+3,6n+4}))\end{align*}
and
\begin{align*}
\varphi(K_n)=&(\varphi(T_{2(3n+2)+1,2(3n+2)+3})-\varphi(T_{2(3n+2)-1,2(3n+2)+1}))\\
&-(\varphi(T_{2(3n+1)+1,2(3n+1)+3})-\varphi(T_{2(3n+1)-1,2(3n+1)+1}))\\
=&(2(3n+2)\varphi(T_{2,3})+\varphi(T_{2(3n+2)+1,2(3n+2)+2})-\varphi(T_{2(3n+2),2(3n+2)+1}))\\
&-(2(3n+1)\varphi(T_{2,3})+\varphi(T_{2(3n+1)+1,2(3n+1)+2})-\varphi(T_{2(3n+1),2(3n+1)+1}))\\
=&(2\varphi(T_{2,3})+\varphi(T_{6n+5,6n+6})-\varphi(T_{6n+4,6n+5}))-(\varphi(T_{6n+3,6n+4})-\varphi(T_{6n+2,6n+3}))\end{align*}

Hence $\varphi(J_n\#-K_n)\\=-(\varphi(T_{6n+4,6n+5})-\varphi(T_{6n+3,6n+4}))+(\varphi(T_{6n+3,6n+4})-\varphi(T_{6n+2,6n+3}))=-\varphi(L_n)$.
\end{proof}

\subsection{The~$\varepsilon$-invariant does not vanish}

The following fact can be verified directly. See~\cite[Equation~(7) and the claim below Equation~(6)]{example2}.

\begin{Lem}\label{segment}Suppose $p$ and $r$ are relatively prime positive integers with~$r<p$.
Then$$\langle p,p+r\rangle\cap\{lp,lp+1,lp+2,\cdots,(l+1)p\}=\{lp,lp+r,\cdots,lp+lr,(l+1)p\}$$for any~$l=0,1,\cdots,\lfloor\frac{p}{r}\rfloor$.\end{Lem}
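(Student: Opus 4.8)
The plan is to parametrize the semigroup by the total number of generators used. Writing a general element of $\langle p,p+r\rangle$ as $px+(p+r)y$ with $x,y\in\mathbb{Z}_{\geqslant0}$ and setting $s=x+y$, I rewrite it as $ps+ry$, where now $s$ ranges over $\mathbb{Z}_{\geqslant0}$ and $y$ over $\{0,1,\cdots,s\}$. Thus
\[\langle p,p+r\rangle=\{ps+ry\mid s\geqslant0,\ 0\leqslant y\leqslant s\},\]
and for each fixed $s$ the corresponding elements form the arithmetic progression $ps,ps+r,\cdots,ps+sr$. Intersecting with the window $[lp,(l+1)p]$ then reduces to deciding, level by level in $s$, which terms of these progressions land inside. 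Throughout I will use that the hypothesis $l\leqslant\lfloor p/r\rfloor$ is equivalent to $lr\leqslant p$.

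First I would rule out the levels that cannot meet the window. If $s\geqslant l+2$, every term is at least $ps\geqslant(l+2)p>(l+1)p$, so nothing is contributed. If $s\leqslant l-1$, the largest term is $(p+r)s\leqslant(p+r)(l-1)=lp+lr-p-r\leqslant lp-r<lp$, where the bound $lr\leqslant p$ is exactly what forces this level strictly below the window. This is the step where the restriction on $l$ is genuinely needed, and I expect it to be the main (if mild) obstacle: one must confirm that the lower staircase levels stay below $lp$, which fails once $l$ exceeds $\lfloor p/r\rfloor$.

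It remains to handle the two relevant levels. For $s=l+1$ the terms are $(l+1)p+ry$ with $0\leqslant y\leqslant l+1$; since $(l+1)p$ is already the right endpoint, only $y=0$ survives, giving the single element $(l+1)p$. For $s=l$ the terms are $lp+ry$ with $0\leqslant y\leqslant l$, each at least $lp$, and the largest satisfies $lp+lr\leqslant lp+p=(l+1)p$, again by $lr\leqslant p$; hence all $l+1$ terms $lp,lp+r,\cdots,lp+lr$ lie in the window. Combining the three regimes yields exactly $\{lp,lp+r,\cdots,lp+lr,(l+1)p\}$, as desired. (When $r=1$ and $l=p=\lfloor p/r\rfloor$ the two extreme terms $lp+lr$ and $(l+1)p$ coincide, which is harmless.)
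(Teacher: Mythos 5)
Your proof is correct. Note that the paper itself does not prove this lemma at all: it simply asserts that the fact ``can be verified directly'' and cites the author's earlier paper [Wan20b, Equation~(7) and the claim below Equation~(6)], so there is no in-paper argument to compare against. Your parametrization $px+(p+r)y = ps+ry$ with $s=x+y$ and $0\leqslant y\leqslant s$ is exactly the kind of direct verification being alluded to, and you carry it out completely: the level-by-level analysis ($s\leqslant l-1$ too small, $s=l$ giving the progression $lp,lp+r,\cdots,lp+lr$, $s=l+1$ contributing only $(l+1)p$, and $s\geqslant l+2$ too large) is airtight, and you correctly isolate the two places where the hypothesis $l\leqslant\lfloor p/r\rfloor$ (equivalently $lr\leqslant p$) is needed, as well as the harmless coincidence $lp+lr=(l+1)p$ when $r=1$, $l=p$. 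Your write-up thus supplies a self-contained proof that the paper leaves to an external reference.
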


Denote the~$\varepsilon$-equivalence class of the complex~$\mathit{CFK}^\infty(K)$ by~$[\![K]\!]$ for any knot~$K$.
That is to say,~$[\![K]\!]:=[\mathit{CFK}^\infty(K)]$.

\begin{Lem}\label{mod3epsilon}For any integer~$n>0$, we have$$[\![T_{3n+1,3n+4}]\!]=[1,3n]+\sum_{i=1}^{n-1}[(1,2)^i,1,3n-3i]+[(1,2)^n]+[2,\cdots]$$and
$$[\![T_{3n+2,3n+5}]\!]=[1,3n+1]+\sum_{i=1}^{n-1}[(1,2)^i,1,3n+1-3i]+[(1,2)^n]+[1,1,\cdots].$$\end{Lem}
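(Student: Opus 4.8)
The plan is to reduce the statement to a direct reading of the staircase $\mathrm{St}(b_1,\dots,b_{2m})$ off the semigroup, followed by a decomposition using the splitting lemmas. Both knots have the form $T_{p,p+3}$ with $p=3n+1$ or $p=3n+2$, and since $n>0$ forces $3<p$ with $\gcd(p,3)=1$, Lemma~\ref{segment} applies with $r=3$: in each window $\{lp,\dots,(l+1)p\}$ the semigroup $\langle p,p+3\rangle$ consists exactly of $lp,lp+3,\dots,lp+3l$ together with $(l+1)p$. Since the $\varepsilon$-class in question is $[\![T_{p,q}]\!]=[b_1,\dots,b_m]$, it suffices to determine $b_1,\dots,b_m$. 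First I would substitute the block description into the recursion~\eqref{Sb}, tracking the partial sums $s_i=b_1+\cdots+b_i$, which for odd $i$ jump to the first gap above $s_{i-1}$ and for even $i$ to the first semigroup element above $s_{i-1}$; by~\eqref{Lgenus} I stop once $s_i$ reaches $(p-1)(q-1)/2$, which cuts off exactly the first half $b_1,\dots,b_m$.

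Carrying out this bookkeeping, I expect that for $0\leqslant l<n$ block $l$ contributes the run $(1,2)^l,1,p-3l-1$, that is $(1,2)^l,1,3(n-l)$ when $p=3n+1$ and $(1,2)^l,1,3n+1-3l$ when $p=3n+2$: the spacing of $3$ between consecutive semigroup elements produces the alternating pattern $(1,2)^l$, and the trailing $1,p-3l-1$ carries $s$ from $lp+3l$ through the gap $lp+3l+1$ up to $(l+1)p$. The place where this pattern breaks is block $n$, and I expect this to be the main obstacle, because it is exactly where the clean description of Lemma~\ref{segment} (valid only for $l\leqslant\lfloor p/3\rfloor=n$) expires and one must reason about the semigroup near the middle of the staircase. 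When $p=3n+1$ the elements $np+3n$ and $(n+1)p$ are consecutive, so the long step degenerates and the first step after $(1,2)^n$ has size $2$ (reaching the first gap of the next window); when $p=3n+2$ they are separated by the single gap $np+3n+1$, producing two further steps $1,1$. These are precisely the tails $[2,\cdots]$ and $[1,1,\cdots]$.

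This identifies the first half of the staircase, when $p=3n+1$, as the single bracket
$$[1,3n,(1,2)^1,1,3n-3,\dots,(1,2)^{n-1},1,3,(1,2)^n,\ \text{tail}],$$
and analogously when $p=3n+2$. To finish I would rewrite this as the stated sum. The summands $[1,3n]$, $[(1,2)^i,1,3n-3i]$ and $[(1,2)^n]=[(1,2)^{n-1},1,2]$ are all of the type appearing in Example~\ref{121n} and are listed from greatest to least, so repeated application of Lemma~\ref{split}~\ref{riffle} turns their sum into the corresponding concatenation. Splitting off the remaining tail is then a single application of Lemma~\ref{split}~\ref{squeeze}: the preceding bracket has all odd-indexed entries equal to $1$ and all even-indexed entries at least $2$, so its hypothesis reduces to checking that every entry of the tail lies in $\{1,2\}$. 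This last point holds because, beyond the long jumps of blocks $0,\dots,n-1$, the semigroup elements are again spaced at most $3$ apart up to the conductor, so all remaining steps have size at most $2$; making this precise is the delicate part of the argument. The second knot is handled verbatim, with $3(n-l)$ replaced by $3n+1-3l$ throughout.
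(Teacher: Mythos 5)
Your overall route coincides with the paper's: read the staircase off the semigroup via Lemma~\ref{segment} and Equation~\eqref{Sb}, use Equation~\eqref{Lgenus} to see where the first half ends, identify the blocks $(1,2)^l,1,p-3l-1$ and the degenerate block $n$ giving the tails $2,\cdots$ and $1,1,\cdots$, and then decompose with Lemma~\ref{split}. (Your order of operations --- riffle the main summands together first, then one application of squeeze for the tail --- versus the paper's squeeze-first-then-riffle is an immaterial difference; both are legitimate.)

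There is, however, a genuine gap exactly at the point you flag as delicate, and the justification you sketch would not close it even if made precise. The inference ``semigroup elements are spaced at most $3$ apart, so all remaining steps have size at most $2$'' is valid only for the \emph{even}-indexed staircase entries: by Equation~\eqref{Sb}, an even slot records the distance from a gap to the next semigroup element, so a spacing bound of $3$ does give $\leqslant 2$ there. But an \emph{odd} slot records the length of a run of consecutive semigroup elements, and no upper bound on spacing can control run lengths --- beyond the Frobenius number every integer is a semigroup element, so such runs become arbitrarily long, and ``spacing at most $3$'' is perfectly consistent with a run of length $\geqslant 3$ occurring inside the first half. Since the hypothesis of Lemma~\ref{split}~\ref{squeeze} requires \emph{every} tail entry, odd slots included, to lie in $\{1,2\}$, your argument as stated does not suffice. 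The paper closes this with two ingredients. First, it proves there are no three consecutive integers $b,b+1,b+2$ outside the semigroup with $b\geqslant np$ (writing $p=3n+1$ or $3n+2$): subtracting a suitable multiple of $p$ preserves non-membership and lands three consecutive gaps inside the window $[np,(n+1)p]$, contradicting Lemma~\ref{segment}; this bounds every even slot occurring after the $((1,2)^{n-1},1,3)$ part. Second --- and this is the idea missing from your proposal --- it uses the palindromic symmetry $b_i=b_{2M+1-i}$ of a staircase $\mathrm{St}(b_1,\cdots,b_{2M})$: each tail entry $a_j$ also occupies an even slot in the mirrored second half, which still lies after the block part, so the even-slot bound applies to it as well. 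Equivalently, a run of three consecutive semigroup elements inside the first half would mirror to three consecutive gaps above $np$, which the first step rules out. Without this symmetry step (or a direct computation showing runs of length $\geqslant 3$ occur only beyond the genus), the odd-slot tail entries are uncontrolled and the squeeze cannot be applied.
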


\begin{proof}[\emph{\bfseries Proof.}]According to Lemma~\ref{segment}, the semigroup $S(T_{3n+1,3n+4})=\langle3n+1,3n+4\rangle$ has initial entries
\begin{align*}&0,3n+1,\\
&(3n+1)+3,2(3n+1),\\
&2(3n+1)+3,2(3n+1)+6,3(3n+1),\\
&\cdots,\\
&n(3n+1)+3,n(3n+1)+6,\cdots,n(3n+1)+3n,(n+1)(3n+1),\\
&(n+1)(3n+1)+3,\cdots\end{align*}
and therefore$$\mathit{CFK}^\infty(T_{3n+1,3n+4})=\mathrm{St}(1,3n,(1,2,1,3n-3),((1,2)^2,1,3n-6),\cdots,((1,2)^{n-1},1,3),(1,2)^n,2,2,\cdots)$$by Equation~\eqref{Sb}.
Note that the sum of the entries before ``$2,2$'' is~$n(3n+1)+3n$, which is less than~$\frac{(3n+1-1)(3n+4-1)}{2}$.
This means the first~$2$ in~``$2,2,\cdots$'' belongs to the first half of all entries due to Equation~\eqref{Lgenus}.
Hence\begin{align*}\mathit{CFK}^\infty(T_{3n+1,3n+4})&\\
=\mathrm{St}(&1,3n,(1,2,1,3n-3),((1,2)^2,1,3n-6),\cdots,((1,2)^{n-1},1,3),(1,2)^n,2,a_1,\cdots,a_m,\\
&a_m,\cdots,a_1,2,(2,1)^n,(3,1,(2,1)^{n-1}),\cdots,(3n-6,1,(2,1)^2),(3n-3,1,2,1),3n,1)\end{align*}
and$$[\![T_{3n+2,3n+5}]\!]=[1,3n,(1,2,1,3n-3),(1,2,(1,2)^2,1,3n-6),\cdots,((1,2)^{n-1},1,3),(1,2)^n,2,a_1,\cdots,a_m]$$for some positive integers~$a_1,\cdots,a_m$.
Here~$m$ is allowed to be~$0$ (meaning $a_1,\cdots,a_m$ is an empty sequence), which happens if~$n=1$.

Observe that there cannot be~$3$ consecutive integers~$b,b+1,b+2$ outside~$S(T_{3n+1,3n+4})$ such that~$b\geqslant n(3n+1)$.
Otherwise,~$c,c+1,c+2$ would be~$3$ consecutive integers outside~$S(T_{3n+1,3n+4})$ such that~$n(3n+1)\leqslant c\leqslant(n+1)(3n+1)$,
where~$c=b-(\lfloor\frac{b}{3n+1}\rfloor-n)(3n+1)$.
Therefore any entry in an even slot after the~``$((1,2)^{n-1},1,3)$'' part of
$$\mathrm{St}(\cdots,(1,2)^{n-1},1,3),(1,2)^n,2,a_1,\cdots,a_m,a_m,\cdots,a_1,\cdots)$$ cannot exceed~$2$.
In particular,~$a_i=1$ or~$2$ for all~$i$.

Now we can decompose~$[\![T_{3n+2,3n+5}]\!]$ as
\begin{align*}&[1,3n,(1,2,1,3n-3),(1,2,(1,2)^2,1,3n-6),\cdots,((1,2)^{n-1},1,3),(1,2)^n,2,a_1,\cdots,a_m]\\
=&[1,3n,(1,2,1,3n-3),(1,2,(1,2)^2,1,3n-6),\cdots,((1,2)^{n-1},1,3),(1,2)^n]+[2,a_1,\cdots,a_m]\\
=&[1,3n,(1,2,1,3n-3),(1,2,(1,2)^2,1,3n-6),\cdots,((1,2)^{n-1},1,3)]+[(1,2)^n]+[2,a_1,\cdots,a_m]\\
=&[1,3n]+[(1,2,1,3n-3),(1,2,(1,2)^2,1,3n-6),\cdots,((1,2)^{n-1},1,3)]+[(1,2)^n]+[2,a_1,\cdots,a_m]\end{align*}
by Lemma~\ref{split}~\ref{squeeze}, and the second summand
\begin{align*}&[(1,2,1,3n-3),(1,2,(1,2)^2,1,3n-6),\cdots,((1,2)^{n-1},1,3)]\\
=&[(1,2,1,3n-3)]+[(1,2,(1,2)^2,1,3n-6),\cdots,((1,2)^{n-1},1,3)]\\
=&\cdots\\
=&\sum_{i=1}^{n-1}[(1,2)^i,1,3n-3i]\end{align*}
by using Lemma~\ref{split}~\ref{riffle} repeatedly.
This completes the proof of the first part.

According to Lemma~\ref{segment}, the semigroup $S(T_{3n+2,3n+5})=\langle3n+2,3n+5\rangle$ has initial entries
\begin{align*}&0,3n+2,\\
&(3n+2)+3,2(3n+2),\\
&2(3n+2)+3,2(3n+2)+6,3(3n+2),\\
&\cdots,\\
&n(3n+2)+3,n(3n+2)+6,\cdots,n(3n+2)+3n,(n+1)(3n+2),\\
&(n+1)(3n+2)+3,\cdots\end{align*}
and therefore $\mathit{CFK}^\infty(T_{3n+2,3n+5})\\=\mathrm{St}(1,3n+1,(1,2,1,3n-2),((1,2)^2,1,2,1,3n-5),\cdots,((1,2)^{n-1},1,4),(1,2)^n,1,1,\cdots)$ by\linebreak Equation~\eqref{Sb}.
Note that the sum of the entries before ``$1,1$'' is~$n(3n+2)+3n$, which is less than~$\frac{(3n+2-1)(3n+5-1)}{2}-1$.
This means the first two~$1$'s in~``$1,1,\cdots$'' belong to the first half of all entries due to Equation~\eqref{Lgenus}.
The rest of the proof is similar to the argument for~$T_{3n+1,3n+4}$.
\end{proof}

\begin{Cor}\label{epsilon3terms}For any integer~$m\geqslant7$ not divisible by~$3$, we have$$[\![T_{m,m+3}]\!]=[1,m-1]+[1,2,1,m-4]+O$$with~$0\leqslant O\ll[1,2,1,m-4]$.\end{Cor}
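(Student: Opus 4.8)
The plan is to read the two leading summands directly off Lemma~\ref{mod3epsilon} and to control the remainder using the domination relations in Lemmas~\ref{dominate} and~\ref{split}. Since $3\nmid m$ and $m\geqslant7$, I would write $m=3n+1$ or $m=3n+2$ with $n\geqslant2$, so that $T_{m,m+3}$ is exactly one of the torus knots $T_{3n+1,3n+4}$ or $T_{3n+2,3n+5}$ appearing in Lemma~\ref{mod3epsilon}. The role of the hypothesis $m\geqslant7$, equivalently $n\geqslant2$, is precisely to guarantee that the index $i=1$ occurs in the sum $\sum_{i=1}^{n-1}$ and that the corresponding summand equals $[(1,2)^1,1,3n-3]=[1,2,1,m-4]$ in the first case and $[(1,2)^1,1,3n-2]=[1,2,1,m-4]$ in the second. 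In both cases Lemma~\ref{mod3epsilon} therefore already exhibits $[\![T_{m,m+3}]\!]=[1,m-1]+[1,2,1,m-4]+O$, where $O$ collects the remaining summands: those of $\sum_{i=1}^{n-1}$ indexed by $i=2,\dots,n-1$, together with $[(1,2)^n]$ and the final term ($[2,\cdots]$ in the first case and $[1,1,\cdots]$ in the second).

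For $0\leqslant O$, I would observe that $O$ is a finite sum of $\varepsilon$-equivalence classes of staircase complexes, each positive in $\mathcal{CFK}_\mathrm{alg}$, whence $O\geqslant0$. The real content is $O\ll[1,2,1,m-4]$. For this I would first record the elementary fact that a finite sum of nonnegative elements each dominated by a fixed $h$ is again dominated by $h$: if $0\leqslant g_1,\dots,g_k\ll h$ and $g=\max_i g_i$, then for every natural number $N$ one has $N\sum_{i}g_i\leqslant(Nk)g<h$, the last inequality holding because $g\ll h$. It then suffices to check that every individual summand of $O$ is $\ll[1,2,1,m-4]$.

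This I would verify summand by summand, writing $[1,2,1,m-4]=[(1,2)^1,1,m-4]$. Each summand $[(1,2)^i,1,c]$ with $i\geqslant2$ has $c\geqslant2$ and satisfies $2<m-4$ (this is exactly where $m\geqslant7$ enters), so Lemma~\ref{split}~\ref{shorter}, in its branch with strictly more copies of $(1,2)$, yields the domination. The summand $[(1,2)^n]=[(1,2)^{n-1},1,2]$ is treated in the same manner, via the branch with more copies of $(1,2)$ when $n\geqslant3$ and via the equal-length branch (using $2\leqslant2<m-4$) when $n=2$. The final summand is handled by a leading-entry comparison: $[2,\cdots]\ll[1,2,1,m-4]$ by Lemma~\ref{dominate}~\ref{a1greater} since $2>1$, while $[1,1,\cdots]\ll[1,2,1,m-4]$ by Lemma~\ref{dominate}~\ref{a2greater} since the leading entries agree and $1<2$. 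Combining these with the summation fact gives $O\ll[1,2,1,m-4]$, which completes the proof. The one place demanding care is the boundary case $n=2$, where $\sum_{i=2}^{n-1}$ is empty and the term $[(1,2)^n]$ must be compared through the equal-length branch of Lemma~\ref{split}~\ref{shorter}; apart from that, the work is simply matching each summand to the correct hypothesis of Lemma~\ref{split}~\ref{shorter}, so the obstacle is bookkeeping rather than anything conceptual.
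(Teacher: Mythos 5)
Your proposal is correct and follows essentially the same route as the paper: split into $m=3n+1$ or $3n+2$ with $n\geqslant2$, peel off the $[1,m-1]$ and $i=1$ terms from Lemma~\ref{mod3epsilon}, and dominate every remaining summand by $[1,2,1,m-4]$ via Lemma~\ref{split}~\ref{shorter} and Lemma~\ref{dominate}~\ref{a1greater},~\ref{a2greater} (the paper packages the $[(1,2)^i,1,c]$ comparisons through Example~\ref{121n}, which is just Lemma~\ref{split}~\ref{shorter} with $a=2$). Your explicit verification that a finite sum of nonnegative elements each $\ll h$ is again $\ll h$ is a point the paper leaves implicit, but it is the same argument in substance.
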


\begin{proof}[\emph{\bfseries Proof.}]The hypothesis implies that~$m=3n+1$ or~$3n+2$ for some integer~$n\geqslant2$.
Thus$$[\![T_{m,m+3}]\!]=[1,m-1]+([1,2,1,m-4]+[(1,2)^2,1,m-7]+\cdots+[(1,2)^{n-1},1,m+2-3n])+[(1,2)^n]+O'$$by Lemma~\ref{mod3epsilon}, where~$O'=[2,\cdots]$ or~$[1,1,\cdots]$.
Example~\ref{121n} shows$$[(1,2)^2,1,m-7]\ll[1,2,1,m-4],\qquad\cdots,\qquad[(1,2)^{n-1},1,m+2-3n]\ll[1,2,1,m-4].$$
We also have~$[(1,2)^n]\ll[1,2,1,m-4]$,~$[2,\cdots]\ll[1,2,1,m-4]$ and~$[1,1,\cdots]\ll[1,2,1,m-4]$
by Lemmas~\ref{split}~\ref{shorter},~\ref{dominate}~\ref{a1greater} and~\ref{dominate}~\ref{a2greater}, respectively.
Taking$$O=([(1,2)^2,1,m-7]+\cdots+[(1,2)^{n-1},1,m+2-3n])+[(1,2)^n]+O',$$the conclusion is proved.
\end{proof}

Similarly but more simply, one can prove the following two lemmas.

\begin{Lem}\label{epsilon2terms2}For any odd integer~$m\geqslant5$, we have~$[\![T_{m,m+2}]\!]=[1,m-1]+O$ with~$0<O\ll[1,2,1,3]$.\end{Lem}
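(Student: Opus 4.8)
The plan is to follow the same template used for the mod-3 computation in Lemma~\ref{mod3epsilon}, but specialized to the two-parameter family $T_{m,m+2}$ with $m$ odd. First I would read off the semigroup $S(T_{m,m+2})=\langle m,m+2\rangle$ using Lemma~\ref{segment} with $p=m$ and $r=2$. Since $r=2$, the gaps between consecutive semigroup elements in each block $\{lm,\dots,(l+1)m\}$ are governed by the arithmetic progression $lm,lm+2,lm+4,\dots$, so the initial run of the semigroup begins $0,\,m,\,m+2,\,2m,\dots$, and I expect the resulting staircase to start with a long initial step of length $m-1$ followed by a predictable pattern of $(1,2)$ blocks. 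Translating this through Equation~\eqref{Sb} should give $\mathit{CFK}^\infty(T_{m,m+2})=\mathrm{St}(1,m-1,\dots)$, confirming that the first two staircase parameters are $1$ and $m-1$.

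Next I would isolate the leading summand. The key structural fact is that, just as in Lemma~\ref{mod3epsilon}, the genus identity~\eqref{Lgenus} guarantees that the transition into the small-gap region (the tail of $2$'s and $1$'s) happens in the first half of the staircase, so the $\varepsilon$-equivalence class $[\![T_{m,m+2}]\!]$ is represented by a finite sequence $[1,m-1,a_1,\dots,a_s]$ with each $a_i\in\{1,2\}$. I would then apply Lemma~\ref{split}~\ref{squeeze} to split off the leading block: because the second parameter $m-1$ is at least as large as every subsequent even-slot entry (all of which are $\le 2$), the hypotheses of \ref{squeeze} are met with the $[1,m-1]$ block, yielding $[\![T_{m,m+2}]\!]=[1,m-1]+O$ where $O=[a_1,\dots,a_s]$ and every entry of $O$ is at most $2$.

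Finally I would verify the domination bound $0<O\ll[1,2,1,3]$. Positivity is immediate since the $\varepsilon$-class of any staircase is positive. For the upper bound, the remainder $O$ has first entry $a_1$, and since all entries of $O$ lie in $\{1,2\}$, I would compare $[a_1,\dots]$ against $[1,2,1,3]$ using Lemma~\ref{dominate}: if $a_1=2$ then part~\ref{a1greater} applies directly (first entry $2>1$), while if $a_1=1$ the second entry of $O$ is at most $2<3$... here I must be careful, since $[1,2,1,3]$ has second entry $2$, not something larger. The main obstacle will be this borderline case $a_1=1,a_2=2$: I would need to check via Lemma~\ref{split}~\ref{shorter} that $O$, being a concatenation of short $(1,2)$-type blocks together with a trailing $1,1$ or $2$ segment, decomposes so that each piece is $\ll[1,2,1,3]$, and then invoke the closure of $\ll$ under the sum of dominated elements (as in Example~\ref{121n}). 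Concretely I expect $O$ to split as a sum $\sum_i[(1,2)^{j_i},1,c_i]+[(1,2)^j]+O'$ exactly as in Corollary~\ref{epsilon3terms}, and each summand is dominated by $[1,2,1,3]$ by Lemma~\ref{split}~\ref{shorter} together with parts~\ref{a1greater} and~\ref{a2greater} of Lemma~\ref{dominate}; verifying that the specific small blocks arising from $S(T_{m,m+2})$ all fit under $[1,2,1,3]$ is the one place requiring genuine attention.
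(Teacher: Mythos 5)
Your computation of the semigroup goes wrong at the decisive point, and the error propagates into an endgame that cannot work. For $S(T_{m,m+2})=\langle m,m+2\rangle$, Lemma~\ref{segment} with $r=2$ says the elements inside each block $\{lm,\dots,(l+1)m\}$ are $lm,\,lm+2,\,lm+4,\dots,lm+2l,\,(l+1)m$: consecutive elements differ by $2$, so each gap has length one and Equation~\eqref{Sb} produces $(1,1)$ blocks, not $(1,2)$ blocks (those occur for $T_{m,m+3}$, where the step is $3$). The correct first half of the staircase is
$$1,\,m-1,\,(1,1,1,m-3),\,((1,1)^2,1,m-5),\,\dots,\,((1,1)^{\frac{m-1}{2}-1},1,2),\,(1)^{\frac{m-1}{2}},$$
so your claim that the remainder after $[1,m-1]$ has all entries in $\{1,2\}$ is false: it contains the large entries $m-3,m-5,\dots$. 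Splitting off $[1,m-1]$ by Lemma~\ref{split}~\ref{squeeze} still works (the remaining entries are all at most $m-3\leqslant m-1$, which is what the hypothesis actually requires), and then the key point is that the remainder $O$ \emph{begins with} $1,1$. Hence Lemma~\ref{dominate}~\ref{a2greater} gives $O\ll[1,2,1,3]$ in a single step (equal first entries, second entry $1<2$), with no decomposition of $O$ needed; positivity of $O$ holds since it is a nonempty staircase class. This is the paper's proof.

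Your proposed endgame, by contrast, would fail if carried out. Had the blocks really been of $(1,2)$ type, the remainder would contain a summand of the form $[1,2,1,m-4]$, and by Example~\ref{121n} this \emph{dominates} $[1,2,1,3]$ rather than being dominated by it; so a decomposition in which every piece is $\ll[1,2,1,3]$ is impossible --- indeed, this is exactly why the statement of Corollary~\ref{epsilon3terms} must keep the term $[1,2,1,m-4]$ explicit. The borderline case $a_1=1,\,a_2=2$ that you flag as ``requiring genuine attention'' is not a technicality to be patched: it is precisely where the lemma would be false if the staircase of $T_{m,m+2}$ looked the way you assumed. The lemma holds because, and only because, the $(1,1)$ block structure forces the second entry of $O$ to be $1$.
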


\begin{proof}[\emph{\bfseries Proof.}]According to Lemma~\ref{segment}, the semigroup $S(T_{m,m+2})=\langle m,m+2\rangle$ has initial entries
\begin{align*}&0,m,\\
&m+2,2m,\\
&2m+2,2m+4,3m,\\
&\cdots,\\
&\tfrac{m-1}{2}m+2,\tfrac{m-1}{2}m+4,\tfrac{m-1}{2}m+6,\cdots,\tfrac{m-1}{2}m+(m-1),(\tfrac{m-1}{2}+1)m,\\
&(\tfrac{m-1}{2}+1)m+2,\cdots\end{align*}
and therefore $\mathit{CFK}^\infty(T_{m,m+2})\\=\mathrm{St}(1,m-1,(1,1,1,m-3),((1,1)^2,1,m-5),\cdots,((1,1)^{\frac{m-1}{2}-1},1,2),(1,1)^{\frac{m-1}{2}-1},2,1,\cdots)$
by\linebreak Equation~\eqref{Sb}.
Note that the sum of the entries up to~``$((1,1)^{\frac{m-1}{2}-1},1,2)$'' is~$\frac{m-1}{2}m$, which is~$\frac{m-1}{2}$ less than~$\frac{(m-1)(m+2-1)}{2}$.
By Equation~\eqref{Lgenus}, this implies$$\mathit{CFK}^\infty(T_{m,m+2})=[1,m-1,(1,1,1,m-3),((1,1)^2,1,m-5),\cdots,((1,1)^{\frac{m-1}{2}-1},1,2),(1)^{\frac{m-1}{2}}].$$
Thus$$[\![T_{m,m+2}]\!]=[1,m-1]+[(1,1,1,m-3),((1,1)^2,1,m-5),\cdots,((1,1)^{\frac{m-1}{2}-1},1,2),(1)^{\frac{m-1}{2}}]$$by Lemma~\ref{split}~\ref{squeeze}.
Take~$O$ to be the second summand. Then~$O\ll[1,2,1,3]$ by Lemma~\ref{dominate}~\ref{a2greater}.
\end{proof}

\begin{Lem}\label{epsilon2terms1}For any integer~$m\geqslant3$, we have~$[\![T_{m,m+1}]\!]=[1,m-1]+O$ with~$0\leqslant O\ll[1,2,1,3]$.\end{Lem}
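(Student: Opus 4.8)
The plan is to mimic the proofs of Lemmas~\ref{mod3epsilon} and~\ref{epsilon2terms2}, the situation being even simpler here because the difference parameter is $r=1$. First I would apply Lemma~\ref{segment} with $p=m$ and $r=1$ to record the initial entries of $S(T_{m,m+1})=\langle m,m+1\rangle$: inside each block $\{lm,lm+1,\cdots,(l+1)m\}$ the semigroup consists of the $l+1$ consecutive integers $lm,lm+1,\cdots,lm+l$ together with $(l+1)m$, the remaining integers being gaps. Feeding this into Equation~\eqref{Sb} produces a completely regular staircase: a short induction on~$k$ shows $b_{2k-1}=k$ and $b_{2k}=m-k$, so that $\mathit{CFK}^\infty(T_{m,m+1})=\mathrm{St}(1,m-1,2,m-2,3,m-3,\cdots)$.

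Next I would locate the middle of the staircase using Equation~\eqref{Lgenus}. The partial sums satisfy $b_1+\cdots+b_{2k}=km$ and $b_1+\cdots+b_{2k-1}=(k-1)m+k$, and setting either equal to the genus $\frac{(m-1)m}{2}$ shows that the first half always terminates at index $m-1$, with half-sequence $(b_1,\cdots,b_{m-1})=(1,m-1,2,m-2,\cdots)$. Since every entry $b_3,\cdots,b_{m-1}$ lies in $[2,m-2]\subseteq[1,m-1]$, the hypotheses of Lemma~\ref{split}~\ref{squeeze} are met for the length-two leading block, so I can peel it off to obtain $[\![T_{m,m+1}]\!]=[1,m-1]+O$, where $O=[2,m-2,3,\cdots]$ (and $O=0$ in the degenerate case $m=3$, where the half-sequence is just $(1,2)$). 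Finally, because $O$ begins with the entry $2>1$, Lemma~\ref{dominate}~\ref{a1greater} gives $O\ll[1,2,1,3]$ at once; the bound $O\geqslant0$ holds automatically and is strict precisely when $m\geqslant4$.

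Essentially all the work is bookkeeping, so I do not expect a serious obstacle. The only step that calls for a little care is pinning down where the first half of the staircase ends: the genus is reached at the even-indexed entry $b_{m-1}$ when $m$ is odd, but at the odd-indexed entry $b_{m-1}$ when $m$ is even. Fortunately the half-sequence has length $m-1$ in both parities, so this case distinction never propagates into the splitting step, which only ever removes the block $[1,m-1]$.
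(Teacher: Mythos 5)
Your proposal is correct and follows essentially the same route as the paper's proof: apply Lemma~\ref{segment} to read off the semigroup, use Equation~\eqref{Sb} to get the staircase $\mathrm{St}(1,m-1,2,m-2,\cdots)$, use Equation~\eqref{Lgenus} to identify the half-sequence, split off $[1,m-1]$ via Lemma~\ref{split}~\ref{squeeze}, and bound the remainder by Lemma~\ref{dominate}~\ref{a1greater}. Your version is in fact more explicit than the paper's (closed formulas for the $b_i$, the exact index $m-1$ where the half ends, and the parity check), but no step differs in substance.
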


\begin{proof}[\emph{\bfseries Proof.}]According to Lemma~\ref{segment}, the semigroup $S(T_{m,m+1})=\langle m,m+1\rangle$ has initial entries
\begin{align*}&0,m,\\
&m+1,2m,\\
&2m+1,2m+2,3m,\\
&\cdots,\\
&(m-2)m+1,(m-2)m+2,\cdots,(m-2)m+m-2,(m-1)m,\\
&(m-1)m+1,\cdots\end{align*}
and therefore~$\mathit{CFK}^\infty(T_{m,m+1})=\mathrm{St}(1,m-1,2,m-2,\cdots,m-1,1,\cdots)$ by Equation~\eqref{Sb}.
Note that the sum of the entries listed is~$(m-1)m$, which is~$2\frac{(m-1)(m+1-1)}{2}$.
By Equation~\eqref{Lgenus}, this implies$$\mathit{CFK}^\infty(T_{3n+1,3n+4})=[1,m-1,2,\cdots].$$
Here no entries exceed~$m-1$.
Thus$$[\![T_{m,m+1}]\!]=[1,m-1]+[2,\cdots]$$by Lemma~\ref{split}~\ref{squeeze}.
Take~$O$ to be the second summand. Then~$O\ll[1,2,1,3]$ by Lemma~\ref{dominate}~\ref{a1greater}.
\end{proof}

\begin{Prop}\label{nonvanish}For any integer~$n>0$, we have $[\![J_n\#-K_n\#L_n]\!]=[1,2,1,6n+1]+O$\linebreak with~$|O|\ll[1,2,1,6n+1]$.
Here~$|O|:=\left\{\begin{aligned}&O&\text{ if }O\geqslant0,\\&-O&\text{ if }O<0.\end{aligned}\right.$\end{Prop}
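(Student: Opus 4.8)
The plan is to expand $[\![J_n\#-K_n\#L_n]\!]$ as a signed sum of torus-knot classes and feed each summand into the three expansions already established. Since the quotient map $K\mapsto[\![K]\!]$ is a group homomorphism, with connected sum going to addition and mirroring to negation, I would first write
$$[\![J_n\#-K_n\#L_n]\!]=[\![J_n]\!]-[\![K_n]\!]+[\![L_n]\!]$$
and then break each of $[\![J_n]\!]$, $[\![K_n]\!]$ and $[\![L_n]\!]$ into the ten torus-knot classes appearing in their definitions. Each such torus knot has the form $T_{m,m+3}$, $T_{m,m+2}$ or $T_{m,m+1}$. For $n>0$ one checks that $m\geqslant7$ and $m\not\equiv0\pmod3$ for the four $T_{m,m+3}$ knots, that $m$ is odd and $\geqslant5$ for the three $T_{m,m+2}$ knots, and that $m\geqslant3$ for the three $T_{m,m+1}$ knots, so Corollary~\ref{epsilon3terms}, Lemma~\ref{epsilon2terms2} and Lemma~\ref{epsilon2terms1} all apply. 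This replaces each class by a \emph{main term} (namely $[1,m-1]+[1,2,1,m-4]$ in the first case and $[1,m-1]$ in the other two) plus an error dominated by $[1,2,1,m-4]$ or by $[1,2,1,3]$.

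The next step is to collect the main terms. Writing out the five $[1,m-1]$ contributions with their signs and multiplicities, I expect every $[1,k]$ to cancel: the total coefficients of $[1,6n+4]$, $[1,6n+3]$, $[1,6n+2]$, $[1,6n+1]$ and $[1,6n]$ each sum to zero. The only surviving main terms then come from the $[1,2,1,m-4]$ parts of the four $T_{m,m+3}$ summands inside $J_n$, leaving
$$[1,2,1,6n+1]-[1,2,1,6n]-[1,2,1,6n-2]+[1,2,1,6n-3].$$
Hence $[\![J_n\#-K_n\#L_n]\!]$ equals $[1,2,1,6n+1]$ plus these three extra staircase terms plus the sum of all ten errors.

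It then remains to check that everything other than the leading $[1,2,1,6n+1]$ forms an error $O$ with $|O|\ll[1,2,1,6n+1]$. Example~\ref{121n} supplies the chain $[1,2,1,3]\ll[1,2,1,4]\ll\cdots$, so each surviving staircase term $[1,2,1,6n]$, $[1,2,1,6n-2]$, $[1,2,1,6n-3]$ is $\ll[1,2,1,6n+1]$; and each individual error, being $\ll[1,2,1,m-4]$ with $m-4\leqslant6n+1$ or $\ll[1,2,1,3]$ with $6n+1\geqslant7$, is likewise $\ll[1,2,1,6n+1]$ by transitivity of $\ll$ on positive elements. Finally I would invoke the elementary fact that $\ll$ is stable under finite sums: if $|a_i|\ll h$ for finitely many $i$, then $|\sum a_i|\leqslant\sum|a_i|\ll h$, because $n\sum|a_i|\leqslant(nN)\max_i|a_i|<h$ for every natural $n$. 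Applying this to the nine error/leftover terms gives $|O|\ll[1,2,1,6n+1]$, as required.

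The hard part is the bookkeeping in the middle step: one must track the signs and the two shifts $m-1$ and $m-4$ across all ten torus knots to confirm that every $[1,k]$ term cancels and that exactly one top staircase class $[1,2,1,6n+1]$ survives. Once this cancellation is verified, the remaining domination estimate is a routine application of Example~\ref{121n} together with the sub-additivity of $\ll$ just noted.
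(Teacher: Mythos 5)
Your proposal is correct and follows essentially the same route as the paper's proof: expand the ten torus-knot classes via Corollary~\ref{epsilon3terms}, Lemma~\ref{epsilon2terms2} and Lemma~\ref{epsilon2terms1}, check that all $[1,k]$ terms cancel leaving $[1,2,1,6n+1]-[1,2,1,6n]-[1,2,1,6n-2]+[1,2,1,6n-3]$ plus errors, and absorb everything but the leading term into an $O$ with $|O|\ll[1,2,1,6n+1]$ using Example~\ref{121n}. The only difference is cosmetic: you spell out the finite sub-additivity of $\ll$, which the paper uses implicitly when forming $O_{12}$.
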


\begin{proof}[\emph{\bfseries Proof.}]Using Corollary~\ref{epsilon3terms}, Lemma~\ref{epsilon2terms2} and Lemma~\ref{epsilon2terms1}, respectively, we know
\begin{align*}
[\![J_n]\!]=&[\![T_{6n+5,6n+8}]\!]-[\![T_{6n+2,6n+5}]\!])\\
&-([\![T_{6n+4,6n+7}]\!]-[\![T_{6n+1,6n+4}]\!])\\
=&([1,6n+4]+[1,2,1,6n+1]+O_1-[1,6n+1]-[1,2,1,6n-2]-O_2)\\
&-([1,6n+3]+[1,2,1,6n]+O_3-[1,6n]-[1,2,1,6n-3]-O_4),\end{align*}
\begin{align*}
[\![K_n]\!]=&([\![T_{6n+5,6n+7}]\!]-[\![T_{6n+3,6n+5}]\!])\\
&-([\![T_{6n+3,6n+5}]\!]-[\![T_{6n+1,6n+3}]\!])\\
=&([1,6n+4]+O_5-[1,6n+2]-O_6)\\
&-([1,6n+2]+O_7-[1,6n]-O_8)\end{align*}
and
\begin{align*}
[\![L_n]\!]=&[\![T_{6n+4,6n+5}]\!]-2[\![T_{6n+3,6n+4}]\!]+[\![T_{6n+2,6n+3}]\!]\\
=&([1,6n+3]+O_9)-2([1,6n+2]+O_{10})+([1,6n+1]+O_{11})\end{align*}
for some~$O_1,\cdots,O_{11}$ with
\begin{align*}&0\leqslant O_1\ll[1,2,1,6n+1],\\
&0\leqslant O_2\ll[1,2,1,6n-2]\ll[1,2,1,6n+1],\\
&0\leqslant O_3\ll[1,2,1,6n]\ll[1,2,1,6n+1],\\
&0\leqslant O_4\ll[1,2,1,6n-3]\ll[1,2,1,6n+1],\\
&O_5,\cdots,O_{11}\ll[1,2,1,3]\ll[1,2,1,6n+1].\end{align*}

Let~$O_{12}=(O_1-O_2-O_3+O_4)-(O_5-O_6-O_7+O_8)+(O_9-2O_{10}+O_{11})$. Then~$|O_{12}|\ll[1,2,1,6n+1]$. Finally
\begin{align*}
[\![J_n\#-K_n\#L_n]\!]=&[1,2,1,6n+1]-[1,2,1,6n-2]-[1,2,1,6n]+[1,2,1,6n-3]+O_{12}\\
=&[1,2,1,6n+1]-([1,2,1,6n]+[1,2,1,6n-2]-[1,2,1,6n-3]-O_{12})\end{align*}
where~$|[1,2,1,6n]+[1,2,1,6n-2]-[1,2,1,6n-3]-O_{12}|\ll[1,2,1,6n+1]$ by Example~\ref{121n}.
\end{proof}

\emph{Remark.} It is easy to verify that~$[\![J_0\#-K_0\#L_0]\!]=0$ by using Lemma~\ref{split}~\ref{squeeze},~\ref{riffle}.

\subsection{Proof of the theorem}

\begin{Thm}\label{main}The family of knots~$\{J_n\#-K_n\#L_n\}_{n=1}^\infty$ generates a subgroup of $\mathcal{C}$ isomorphic to $\mathbb{Z}^\infty$ such that
each of its nonzero elements has vanishing~$\Upsilon$-invariant and~$\varphi$-invariant but nonvanishing $\varepsilon$-invariant.\end{Thm}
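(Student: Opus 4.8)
The plan is to transport the whole statement into the totally ordered abelian group $\mathcal{CFK}_\mathrm{alg}$ and reduce it to a single linear-independence claim there. Write $M_n=J_n\#-K_n\#L_n$ and $g_n=[\![M_n]\!]$. Recall that $K\mapsto[\![K]\!]$ is the quotient homomorphism $\mathcal{C}\to\mathcal{CFK}\subseteq\mathcal{CFK}_\mathrm{alg}$, and that by the definition of the order on $\mathcal{CFK}_\mathrm{alg}$ one has $\varepsilon(K)\neq0$ exactly when $[\![K]\!]\neq0$. I would therefore first argue that it suffices to prove $g_1,g_2,g_3,\ldots$ are linearly independent in $\mathcal{CFK}_\mathrm{alg}$. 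Indeed, linear independence transports back along the homomorphism to show $\{M_n\}$ is a basis of a subgroup of $\mathcal{C}$ isomorphic to $\mathbb{Z}^\infty$; and for every nonzero integer combination $x=\sum_n c_nM_n$ it gives $[\![x]\!]=\sum_n c_ng_n\neq0$, hence $\varepsilon(x)\neq0$. The vanishing of $\Upsilon$ and $\varphi$ on such $x$ needs no new work: both are homomorphisms out of $\mathcal{C}$, and by Lemma~\ref{vanish} they already vanish on each generator $M_n$.

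To establish the linear independence I would apply Lemma~\ref{dominate}~\ref{domination implies independence}, so the real goal becomes the domination chain $0<g_1\ll g_2\ll g_3\ll\cdots$. Writing $s_n=[1,2,1,6n+1]$, Proposition~\ref{nonvanish} supplies $g_n=s_n+O_n$ with $|O_n|\ll s_n$. Two quick preliminaries set the stage. First, each $g_n>0$, because $|O_n|\ll s_n$ forces $-s_n<O_n$, so $g_n=s_n+O_n>0$. Second, the anchors form their own chain $s_1\ll s_2\ll\cdots$: Example~\ref{121n} yields $[1,2,1,6n+1]\ll[1,2,1,6n+2]\ll\cdots\ll[1,2,1,6n+7]=s_{n+1}$, and since $a\ll b\leqslant c$ implies $a\ll c$, this collapses to $s_n\ll s_{n+1}$.

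The step I expect to be the main obstacle is promoting $s_n\ll s_{n+1}$ to $g_n\ll g_{n+1}$ while absorbing the error terms $O_n$, which are allowed to be negative. The plan is, for an arbitrary natural number $k$, to sandwich $kg_n$ between $s_n$ and $g_{n+1}$. For the upper bound, $kg_n=ks_n+kO_n\leqslant ks_n+k|O_n|<(k+1)s_n$, using $k|O_n|<s_n$. For the lower bound, $g_{n+1}\geqslant s_{n+1}-|O_{n+1}|$. It then remains to check $(k+1)s_n<s_{n+1}-|O_{n+1}|$, that is, $(k+1)s_n+|O_{n+1}|<s_{n+1}$; this holds because $s_n\ll s_{n+1}$ gives $(k+1)s_n\ll s_{n+1}$, because $|O_{n+1}|\ll s_{n+1}$, and because the sum of two elements each dominated by $s_{n+1}$ stays strictly below $s_{n+1}$ (if $0\leqslant a,b\ll c$ then $a+b\leqslant2\max\{a,b\}<c$). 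Chaining the bounds gives $kg_n<(k+1)s_n<s_{n+1}-|O_{n+1}|\leqslant g_{n+1}$ for every $k$, which is precisely $g_n\ll g_{n+1}$.

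With $0<g_1\ll g_2\ll\cdots$ in hand, Lemma~\ref{dominate}~\ref{domination implies independence} delivers the linear independence of the $g_n$, and the reduction of the first paragraph then closes out all three assertions of the theorem. Apart from the error-absorption in the third paragraph, the argument is routine bookkeeping with the homomorphism and the order relation.
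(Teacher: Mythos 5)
Your proposal is correct and follows essentially the same route as the paper: reduce everything to $\mathcal{CFK}_\mathrm{alg}$ via the quotient homomorphism, use Lemma~\ref{vanish} for the vanishing statements, use Proposition~\ref{nonvanish} to write $[\![J_n\#-K_n\#L_n]\!]=[1,2,1,6n+1]+O_n$, establish the domination chain $0<g_1\ll g_2\ll\cdots$ by absorbing the error terms, and conclude with Lemma~\ref{dominate}~\ref{domination implies independence}. Your write-up merely makes explicit the error-absorption estimates that the paper's displayed computation leaves terse.
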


\begin{proof}[\emph{\bfseries Proof.}]Obviously any linear combination of knots in the family has vanishing~$\Upsilon$-invariant\linebreak and~$\varphi$-invariant by Lemma~\ref{vanish}.

From Proposition~\ref{nonvanish}, we know~$[\![J_n\#-K_n\#L_n]\!]=[1,2,1,6n+1]+O_n$ with~$|O_n|\ll[1,2,1,6n+1]$.
This implies~$[\![J_n\#-K_n\#L_n]\!]\ll[\![J_{n+1}\#-K_{n+1}\#L_{n+1}]\!]$ for each~$n$.
In fact,
\begin{align*}&[\![J_{n+1}\#-K_{n+1}\#L_{n+1}]\!]-k[\![J_n\#-K_n\#L_n]\!]\\
=&([1,2,1,6(n+1)+1]-O_{n+1})-k([1,2,1,6n+1]-O_n)\\
=&[1,2,1,6(n+1)+1]-(O_{n+1}+k[1,2,1,6n+1]-kO_n)\\
>&0.\end{align*}
for any natural number~$k$.

Now~$\{[\![J_n\#-K_n\#L_n]\!]\}_{n=1}^\infty$ is a basis of a free subgroup of infinite rank in~$\mathcal{CFK}_\mathrm{alg}$ by Lemma~\ref{dominate}~\ref{domination implies independence}.\linebreak
Therefore so is~$\{J_n\#-K_n\#L_n\}_{n=1}^\infty$ in~$\mathcal{C}$.
Any nontrivial linear combination of this family has nonvanishing~$\varepsilon$-invariant, because it maps to a nontrivial linear combination of~$\{[\![J_n\#-K_n\#L_n]\!]\}_{n=1}^\infty$
under the homomorphism from~$\mathcal{C}$ to~$\mathcal{CFK}_\mathrm{alg}$.
\end{proof}

\end{document}